\documentclass[reqno, 11pt]{amsart}

\usepackage[toc]{appendix}
\usepackage{amsthm} 				
\usepackage{amsmath}				
\usepackage{amssymb}				
\usepackage{amsfonts}
\usepackage{hyperref}
\usepackage{csquotes}				
\usepackage[T1]{fontenc} 				

\usepackage[a4paper]{geometry}				

\usepackage{tikz-cd}					
\usetikzlibrary{arrows}

\usepackage[all]{xy}					
\usepackage{pgf,tikz}
\usetikzlibrary{calc}
\usepackage{mathrsfs}
\usepackage{amsfonts}
\usetikzlibrary{arrows}
\usepackage[mathscr]{eucal}
\usepackage[normalem]{ulem}
\usepackage{enumitem}


\usepackage{graphicx}
\usepackage{caption}
\usepackage{subcaption}


\newtheorem{theorem}{Theorem}[section]
\newtheorem{corollary}[theorem]{Corollary}
\newtheorem{proposition}[theorem]{Proposition}
\newtheorem{lemma}[theorem]{Lemma}

\newtheorem*{theorem*}{Theorem}

\theoremstyle{definition}
\newtheorem{definition}[theorem]{Definition}
\newtheorem{example}[theorem]{Example}
\newtheorem{remark}[theorem]{Remark}

\newcommand{\Hom}{\mbox{Hom}}

\newcommand{\F}{\mathcal{F}}
\newcommand{\E}{\mathcal{E}}

\newcommand{\M}{\mathbb{M}}
\renewcommand{\P}{\mathscr{P}}
\newcommand{\C}{\mathcal{D}}

\newcommand{\soc}{\mbox{soc}}

\begin{document}

\title{Characterisations of trivial extensions}

\author[Fernandez]{Elsa Fernandez}
\address{\textbf{[EF]} 
Facultad de Ingenier\'ia, Universidad Nacional de la Patagonia San Juan Bosco. (9120) Puerto Madryn, Argentina.}
\email{elsafer9@gmail.com}

\author[Schroll]{Sibylle Schroll}
\address{\textbf{[SS]}
Mathematical Institut
University of Cologne
Weyertal 86-90
(50931) Köln
Germany and Institutt for matematiske fag, NTNU, N-7491 Trondheim, Norway}
\email{schroll@math.uni-koeln.de}

\author[Treffinger]{Hipolito Treffinger}
\address{\textbf{[HT]} Institut Math\'ematique Jussieu - Paris Rive Gauche
Univerist\'e Paris Cit\'e, Bâtiment Sophie Germain
5, rue Thomas Mann.
(75013) Paris, 
France.}
\email{treffinger@imj-prg.fr}

\author[Trepode]{Sonia Trepode}
\address{\textbf{[ST]}
Centro Marplatense de Investigaciones Matemáticas. FCEyN, Universidad Nacional de
Mar del Plata, CONICET. Funes 3350. (7600) Mar del Plata, Argentina.}
\email{strepode@mdp.edu.ar}

\author[Valdivieso]{Yadira Valdivieso}
\address{\textbf{[YV]}
Departamento de Actuaría, Física y Matemáticas, Universidad de las Américas Puebla, Ex Hacienda Sta. Catarina Mártir S/N, San Andrés Cholula, 72810 Puebla, México}
\email{yadira.valdivieso@udlap.mx}

\maketitle

\begin{abstract}
In this paper we give a characterisation of trivial extension algebras in terms of quivers with relations. 
This result is based on a explicit description of the ideal of relations of the trivial extension of an algebra, given by the first author in the appendix.
We also give a new proof of Wakamatsu's theorem in terms of their quiver and relations, which determines when two given algebras have isomorphic trivial extensions.
\end{abstract}

\section{Introduction}

Split extensions of a ring by a bimodule are classical constructions that appear in many different contexts.
For example, Hochschild \cite{Hochschild1945} showed that the trivial extension of a ring $R$ by an $R$-$R$-bimodule $M$ corresponds to the zero element in the second cohomology group $H^2(R,M)$. 
In commutative algebra, Nagata \cite{Nagata} used split-extensions to show that any module over a commutative ring can be thought of as an ideal. 
More recently, split extensions have played a central role in the study of cluster-tilted algebras \cite{ABScluster-tilted, BFPPT10}, in the connections between gentle algebras and symmetric special biserial algebras  \cite{Green, Schroll2015}, in higher homological algebra \cite{Grant2019} and the Hochschild (co)homology of split-extensions has been studied, for example, in \cite{Assem2013, Assem2016, Bergh2017}. 

Although the theory of trivial extensions has been greatly developed, in general, given an explicit algebra in terms of  quiver and relations, there has not been an explicit construction of the structure of the trivial extension as an algebra, except for certain cases \cite{Fernandez2002, FernandezTesis, HernandezTesis}.
Note that by the trivial extension of an algebra $A$, we mean the algebra $T(A) = A \ltimes D(A)$ where $D(A)$ is $A$-$A$-bimodule given by the dual of $A$.  
In the appendix of this paper, an explicit construction of the trivial extension is developed by the first author (see Theorem~\ref{thm:1.1}).  
Namely given an algebra $A = KQ_A/I_A$ the ideal of relations  of the  trivial extension $T(A)$ is explicitly constructed, thus completing the description of $T(A)$ since in \cite{Fernandez2002} the quiver of $T(A)$ was already constructed. 

Furthermore, it has been an open question to know which algebras are isomorphic to the trivial extension of a finite dimensional algebra. 
Since trivial extensions are symmetric, the question reduces to asking when is a symmetric algebra isomorphic to a trivial extension. We address this question for all finite dimensional symmetric $K$-algebras, see Theorem~\ref{thm:1.2}. 
Finally, we turn to the question of when two algebras have isomorphic trivial extensions, a question already considered in the abstrac setting by Wakamatsu in \cite{Wakamatsu1984}. 
Lastly, in Theorem~\ref{thm:1.3}, we give a precise characterisation of the algebras that have isomorphic trivial extensions in the language of admissible cuts which were introduced in \cite{Fernandez2006}.

We now  state the main results of this paper. 
For this we recall that in \cite{Fernandez2002} the quiver $Q_{T(A)}$ of the trivial extension $T(A)$ of a finite dimensional algebra $A =KQ_A/I_A$ has been described. 
Namely, the set of vertices $(Q_A)_0$ and $(Q_{T(A)})_0$ coincide.  Then given a $K$-basis $\M=\{p_1, \dots, p_n\}$ of $\soc_{A^e}A$,
the set of arrows $(Q_{T(A)})_1$ is the disjoint union of $(Q_A)_1$ and $\{\beta_{p_1}, \dots, \beta_{p_n}\}$ where $s(\beta_{p_i})=t(p_i)$ and $t(\beta_{p_i})=s(p_i)$ for every $p_i \in \M$.
Given a path $p$, we call a path $q$ such that $pq$ is a cycle a supplement of $p$ (see Definition~\ref{def:supplement}. For the notion of elementary cycle, we refer to Definition~\ref{def:elementary}. 
In the appendix of this paper the following is shown. 

\begin{theorem}[Theorem~\ref{thm:ideal}]\label{thm:1.1}
Let $A= KQ_A / I_A$ be a finite-dimensional algebra and let $T(A) = KQ_{T(A)}/I_{T(A)}$ be its trivial extension.
Then the quiver $Q_{T(A)}$ is as above and the ideal $I_{T(A)}$ is generated by the union of the following sets.

\begin{enumerate}
    \item A generating set of the ideal of relations $I_A$ of $A$.
    \item The paths that are not contained in an elementary cycle.
    \item For any vertices $x$ and $y$ in $Q_{T(A)}$, the linear combinations of paths $\rho\in e_xKQ_{T(A)}e_y$ such that $q \rho\in I_x'$ or $\rho q\in I_{y}'$ for any supplement path $q$ in an elementary cycle $C$.
\end{enumerate}
\end{theorem}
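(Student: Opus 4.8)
The plan is to present $T(A)$ via an explicit algebra homomorphism from the path algebra and then pin down its kernel by a dimension count. Recall that as a $K$-vector space $T(A) = A \oplus D(A)$, where $D(A)$ carries its natural $A$-$A$-bimodule structure and satisfies $D(A) \cdot D(A) = 0$. I would extend the basis $\M = \{p_1, \dots, p_n\}$ of $\soc_{A^e} A$ to a $K$-basis of $A$ and let $p_i^* \in D(A)$ denote the dual functional. Then define $\phi \colon KQ_{T(A)} \to T(A)$ by sending each arrow of $Q_A$ to the corresponding element of $A \subseteq T(A)$ and each new arrow $\beta_{p_i}$ to $p_i^* \in D(A)$; since $s(\beta_{p_i}) = t(p_i)$ and $t(\beta_{p_i}) = s(p_i)$, this respects sources and targets and extends to an algebra homomorphism. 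Because the $p_i^*$ generate $D(A)$ as a bimodule and the $Q_A$-paths surject onto $A$, the map $\phi$ is surjective.

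The first step is to classify the image of a path according to the number of new arrows it contains. A path with no new arrow is a path $w$ in $Q_A$ and maps to $\bar w \in A$; a path with exactly one new arrow, say $u \beta_{p_i} v$ with $u, v$ paths in $Q_A$, maps to $\bar u \, p_i^* \, \bar v \in D(A)$; and a path with at least two new arrows maps to $0$ because $D(A) \cdot D(A) = 0$. Using the bimodule action $(a \, f \, b)(x) = f(b x a)$, one computes $\phi(u \beta_{p_i} v)$ as the functional $x \mapsto p_i^*(\bar v x \bar u)$, so that its vanishing is governed by whether $\bar v x \bar u$ can have nonzero $p_i$-coordinate, i.e. by the supplement and elementary-cycle combinatorics of Definitions~\ref{def:supplement} and~\ref{def:elementary}.

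Next I would verify that each generator of the three families lies in $\ker \phi$. Family (1) maps to $0$ since $I_A$ is the ideal of relations of $A$. Family (2) maps to $0$ because a path not contained in an elementary cycle either carries at least two new arrows, hence dies by $D(A) \cdot D(A) = 0$, or carries a single new arrow $\beta_{p_i}$ whose surrounding $Q_A$-pieces cannot be completed to reach $p_i$, forcing $p_i^*(\bar v x \bar u) = 0$ for all $x$. Family (3) maps to $0$ directly from the computation above: the conditions $q\rho \in I_x'$ or $\rho q \in I_y'$ for every supplement $q$ say precisely that composing $\rho$ with any completion of its elementary cycle lands in $I_A$, which is exactly the statement that $\phi(\rho)$ is the zero functional. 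This yields a surjection $\bar\phi \colon B := KQ_{T(A)} / J \twoheadrightarrow T(A)$, where $J$ denotes the ideal generated by the three families.

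It then remains to show $\dim_K B \le \dim_K T(A) = 2 \dim_K A$, which upgrades $\bar\phi$ to an isomorphism and hence identifies $J$ with $I_{T(A)}$. For this I would exhibit a spanning set of $B$ of the required size: the residues of a fixed basis of $A$ coming from $Q_A$-paths contribute $\dim_K A$ elements, while relation (2) annihilates all paths outside elementary cycles, and relation (3), together with (1), forces the single-new-arrow paths to satisfy exactly the linear dependencies of $I_A$ read through the cycle, cutting them down to a set in bijection with a basis of $D(A)$, another $\dim_K A$ elements. The main obstacle is precisely this last count: one must show that relations (2) and (3) impose no fewer and no more relations among the single-new-arrow paths than those already present in $\ker \phi$, i.e. that the bimodule-theoretic vanishing of $\bar u \, p_i^* \, \bar v$ is captured on the nose by the supplement conditions in (3). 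Establishing this equivalence—translating the annihilation of the functional $x \mapsto p_i^*(\bar v x \bar u)$ into membership in the local ideals $I_x'$ for all supplements $q$—is the technical heart of the argument; everything else is bookkeeping over the path decomposition.
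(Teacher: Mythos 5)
Your setup coincides with the paper's: the same surjection $\Phi\colon KQ_{T(A)}\to T(A)$ sending $\alpha\mapsto(\alpha,0)$ and $\beta_{p_i}\mapsto(0,p_i^*)$, the same case analysis of paths by the number of new arrows, the same verification that the three families of generators lie in $\ker\Phi$, and the same plan of closing the argument by the dimension count $\dim_K KQ_{T(A)}/J\le 2\dim_K A$. However, the one step you explicitly defer --- showing that the single-new-arrow paths modulo $J$ contribute at most $\dim_K A$ --- is precisely the content of the proof, and you give no argument for it. Asserting that relation (3) ``cuts them down to a set in bijection with a basis of $D(A)$'' is a restatement of the desired conclusion, not a proof: a priori the quotient $KQ_{T(A)}/J$ could be strictly larger than $T(A)$ if $J$ were too small, and nothing in your outline rules this out.

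The paper closes this gap as follows. Writing $KQ_{T(A)}=KQ_A+I_\M$, it sets $\P_{xy}=\iota(e_xAe_y)$ and $\F_{xy}=\pi(e_xI_\M e_y)$ in $KQ_{T(A)}/J$ and proves the key inequality $\dim_K\P_{xy}\ge\dim_K\F_{yx}$, from which $\dim_K(KQ_{T(A)}/J)\le\sum_{x,y}(\dim\P_{xy}+\dim\P_{yx})=2\dim_KA$ follows. The inequality is proved by contradiction: given $n+1$ linearly independent classes $\mu_1,\dots,\mu_{n+1}$ in $\F_{yx}$ with $n=\dim\P_{xy}$, each $\mu_t$ admits a supplement $\gamma_t\in\P_{xy}$ in an elementary cycle; a linear dependence among the $\gamma_t$ produces a nonzero product $\mu_s\gamma_r$, and then the fact that the elementary cycles at a fixed vertex span a one-dimensional subspace of $KQ_{T(A)}/I_x'$ (Remark~\ref{rem:remark1}, itself resting on Proposition~\ref{prop:ideal_of_T(A)}) forces a relation $\rho\mu_s-b\mu_r\in J$, which contradicts linear independence once one uses the nilpotency $\rho^m=0$ of cycles in the finite-dimensional algebra $A$. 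You would need to supply an argument of this kind (or an equivalent one) for your proposal to constitute a proof; as written, it identifies where the difficulty lies but does not resolve it.
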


Having this complete description of the ideal of relations of a trivial extension, we are able to give a complete characterisation of when a symmetric algebra is isomorphic to  a trivial extension of some finite-dimensional $K$-algebra. 

\begin{theorem}[Theorem~\ref{thm:characterisation_trivial_extensions}]\label{thm:1.2}
Let $A=KQ/I$ be an algebra. 
Then $A$ is isomorphic to the trivial extension of some finite-dimensional $K$-algebra if and only if:

\begin{enumerate}
    \item[(a)] There is a presentation of $A$ for which there exists a set $\E$ of distinguished cycles in $Q_A$ with weight function $\omega: \E \to K$ and
    \item[(b)] There is an allowable cut $\C =\{\gamma_1, \cdots, \gamma_t \}$ of $A$ such that the quotient $B=A/\langle \C\rangle$ verifies the following:
\begin{enumerate}
\item [(i)] $A$ is a split-by-nilpotent extension of $B$.
\item [(ii)] The supplements of the cut arrows in the cycles in $\mathcal{E}$ are in one-to-one correspondence with the elements of a basis of $\soc_{B^e} B$.
\end{enumerate}
\end{enumerate}
In this case the two-sided ideal $\langle \C \rangle$ is isomorphic to $DB$ as a $B$-$B$-bimodule and $A$ is isomorphic to $T(B)$.
\end{theorem}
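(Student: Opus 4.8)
The plan is to prove both implications, using Theorem~\ref{thm:1.1} as the bridge between the two sides. For the forward direction, I would assume $A \cong T(B)$ for some finite-dimensional $K$-algebra $B$. By the description of $Q_{T(B)}$ recalled above and by Theorem~\ref{thm:1.1}, the quiver of $A$ is obtained from $Q_B$ by adjoining one arrow $\beta_{p_i}$ for each element $p_i$ of a fixed basis $\M = \{p_1, \dots, p_n\}$ of $\soc_{B^e} B$, with $s(\beta_{p_i}) = t(p_i)$ and $t(\beta_{p_i}) = s(p_i)$. I would then take $\C = \{\beta_{p_1}, \dots, \beta_{p_n}\}$ as the candidate cut: by construction $A/\langle \C \rangle \cong B$, and each $\beta_{p_i}$ closes up with its supplement $p_i$ into an elementary cycle, so these cycles furnish the distinguished set $\E$, with $\omega$ recording the scalars of the identification $DB \cong \langle \C \rangle$ coming from the trivial extension. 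Condition (i) is immediate since $T(B) = B \ltimes DB$ is split over $B$ with square-zero (hence nilpotent) kernel, and condition (ii) holds because the supplement of $\beta_{p_i}$ is exactly $p_i$, so the supplements of the cut arrows are precisely the basis $\M$ of $\soc_{B^e} B$. The allowability of $\C$ is read off from the fact that each elementary cycle of $T(B)$ contains exactly one new arrow.

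For the converse, I would assume (a) and (b) and set $B = A/\langle \C \rangle$. By hypothesis (i), $A$ is a split-by-nilpotent extension of $B$, so $A \cong B \ltimes M$ with $M = \langle \C \rangle$ a nilpotent $B$-$B$-bimodule. The first reduction is to prove $M^2 = 0$: a nonzero element of $M^2$ would be represented by a path containing at least two cut arrows, which cannot lie in a single elementary cycle of $\E$; the allowability of the cut together with the relations of the presentation forces such a path to vanish. The heart of the argument is then to construct a $B$-$B$-bimodule isomorphism $M \cong DB$. Using (ii), the supplements $q_1, \dots, q_t$ of the cut arrows form a basis of $\soc_{B^e} B$; via the natural identification $\top_{B^e}(DB) \cong D(\soc_{B^e} B)$ and the scalars provided by $\omega$, I would send each cut arrow $\gamma_j$ to a generator of $DB$ dual to $q_j$, and check that the resulting map is a bijection intertwining the left and right $B$-actions. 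Once $M \cong DB$ is established, $A \cong B \ltimes DB = T(B)$ follows at once, and this isomorphism simultaneously yields the final assertion that $\langle \C \rangle \cong DB$ as bimodules.

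I expect the main obstacle to be the construction and verification of the bimodule isomorphism $M \cong DB$ in the converse direction, equivalently the matching of the relations of $A$ with the three families describing $I_{T(B)}$ in Theorem~\ref{thm:1.1}. The families generated by $I_B$ and by paths not contained in an elementary cycle are handled by the split-by-nilpotent structure and the vanishing $M^2 = 0$, but the third family, the orthogonality relations relating a linear combination $\rho$ to its products with supplement paths, is precisely what encodes the duality $DB$, and it is here that the weight function $\omega$ and the socle identification in (ii) must be combined to produce the correct scalars. Verifying that the assignment respects \emph{both} one-sided actions, rather than merely matching dimensions, is the delicate point on which the whole argument turns.
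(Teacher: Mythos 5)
Your proposal follows essentially the same route as the paper: the forward direction takes the new arrows $\beta_{p_i}$ as the cut with the elementary cycles as the distinguished set, and the converse identifies each cut arrow with the dual of the socle element given by its supplement and then matches the presentation of $A$ against that of $T(B)$ supplied by Theorem~\ref{thm:1.1}. The one step you defer --- verifying the bimodule identification $\langle \C \rangle \cong DB$ --- is precisely where the paper's proof does its real work, using Definition~\ref{def:distinguished} to show that every basis element of $\soc_{B^e}B$ is the supplement of a single cut \emph{arrow} rather than of a longer path through the cut, which is what gives the inclusion $DB \subset \langle \C \rangle$.
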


We note that the previous result reduces the problem of identifying if an algebra is a trivial extension to the problem of determining whether an algebra is a split-by-nilpotent extension, a problem that has been solved in \cite{ACT}. Thus our result gives a complete answer which in combination with \cite{ACT} should be implementable in terms of a computer algorithm.

\medskip

In the 70's,  M\"uller \cite{MU74}, Green and Reiten \cite{GR76}, Iwanaga and Wakamatsu \cite{IW79}, and Hughes and Waschb\"usch \cite{HW83} studied the relationship between the representation type of the algebra $T(A)$ and the algebra $A$. 
In particular, in \cite{HW83}, they gave a complete description of the representation-finite trivial extension of algebras which relies on whether two trivial extensions are isomorphic or not. 
This result motivated Wakamatsu to study when two trivial extensions are isomorphic, see \cite{Wakamatsu1984}. 
He gave, for two Artin algebras, necessary and sufficient conditions for having isomorphic trivial extensions.
In more precise terms, he shows that two algebras $A$ and $A'$ have isomorphic trivial extensions if and only if they are split-by-nilpotent extensions of a common subalgebra $S$ by a $S$-$S$-bimodule $M$ and its dual $D(M)$, respectively.

The original motivation for this paper was to give an explicit description of the relationship of two algebras $A$ and $A'$ that have isomorphic trivial extensions.
This is achieved in our next result, which gives an explicit description of the algebra $S$ and the extending $S$-$S$-bimodules in the case where $A$ and $A'$ are given by quiver and relations. We also  give an explicit characterisation of when the trivial extensions of two algebras are isomorphic. 
Our result is shown by a proof which is independent of Wakamatsu's proof. 

\begin{theorem}[Theorem~\ref{thm:Wakamatsu_for_path_algebras}]\label{thm:1.3}
Let $A= KQ_A/I_A$ be a finite-dimensional algebra with trivial extension $T(A)$ and set $(Q_A)_1=\{\alpha_1, \ldots, \alpha_n\}$ and $\{\beta_1, \dots, \beta_t\}= (Q_{T(A)})_1 \setminus (Q_A)_1$.
The following are equivalent
\begin{enumerate}[label=(\alph*)]
\item $T(A)\cong T(A')$.
\item There exists an admisible cut $\C$ of the form
$$\C\;=\;\{ \alpha_1, \ldots, \alpha_r, \beta_1, \ldots, \beta_s: \alpha_i \in (Q_A)_1 \text{ and } \beta_j\not \in (Q_A)_1\}$$ with $A'\cong T(A)/ \langle \C \rangle$, $T(A)$ is a split-by-nilpotent extension of $A'$ and the supplements in the elementary cycles of the cut arrows are in  one-to-one correspondence with the elements of  $\soc_{A'^e}
A'$.
\item $A \cong S \ltimes M$ and $A' = S \ltimes D(M)$, where 
\begin{enumerate}
    \item[(i)] $S$ is the subalgebra of $T(A)$ generated by $\sum_{x \in (Q_{T_A})_0} e_x$ the identity of $T(A)$ and $\alpha_{r+1}, \ldots, \alpha_n$,
    \item[(ii)] $M = S \left\langle \alpha_1, \ldots, \alpha_r \right\rangle S$,
    \item[(iii)] and $D(M) = S \left\langle \beta_{s+1}, \ldots, \beta_t\right\rangle S$.
\end{enumerate}
\end{enumerate}
\end{theorem}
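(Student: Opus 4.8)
The plan is to prove the three statements equivalent by running the cycle $(a)\Rightarrow(b)\Rightarrow(c)\Rightarrow(a)$, using the explicit generators of $I_{T(A)}$ from Theorem~\ref{thm:1.1} to translate between the combinatorial language of admissible cuts and the bimodule language of split extensions. I note at the outset that the implication $(b)\Rightarrow(a)$ is already essentially contained in Theorem~\ref{thm:1.2}: the data in (b) is exactly an instance of the allowable-cut conditions of that theorem applied to the (automatically symmetric) algebra $T(A)$, so that $T(A)\cong T\big(T(A)/\langle\C\rangle\big)=T(A')$. I will nonetheless run the full cycle through (c), since this is what produces the explicit bimodules and gives the promised proof independent of Wakamatsu's.

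For $(a)\Rightarrow(b)$ I would start from an isomorphism $\phi\colon T(A')\xrightarrow{\sim}T(A)$. The algebra $A'$ is the degree-zero part of $T(A')$ for the canonical grading that places the arrows $(Q_{T(A')})_1\setminus (Q_{A'})_1$ in degree one, equivalently $A'\cong T(A')/\langle(Q_{T(A')})_1\setminus(Q_{A'})_1\rangle$. Transporting this grading along $\phi$ puts a second grading on $T(A)$ concentrated in degrees $0$ and $1$, whose degree-one arrows form a set $\C\subseteq (Q_{T(A)})_1$. Because $\phi$ need not respect the splitting $(Q_{T(A)})_1=(Q_A)_1\sqcup\{\beta_1,\dots,\beta_t\}$, this $\C$ contains some original arrows $\alpha_1,\dots,\alpha_r$ and some added arrows $\beta_1,\dots,\beta_s$, so it has the stated mixed form; that $\C$ is an admissible cut and that the split-by-nilpotent and socle conditions hold then follows because $\C$ is by construction the set of degree-one arrows of a trivial-extension grading of $T(A)$, i.e.\ one reads them off from Theorem~\ref{thm:1.2} (or directly from Theorem~\ref{thm:1.1}). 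Finally $A'\cong T(A)/\langle\C\rangle$ because $\phi$ carries $\langle(Q_{T(A')})_1\setminus(Q_{A'})_1\rangle$ onto $\langle\C\rangle$.

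For $(b)\Rightarrow(c)$, set $S$, $M$ and $N:=S\langle\beta_{s+1},\dots,\beta_t\rangle S$ as in the statement. The split-by-nilpotent hypothesis, together with $A/\langle\alpha_1,\dots,\alpha_r\rangle\cong S$ and $A'/\langle\beta_{s+1},\dots,\beta_t\rangle\cong S$, yields the decompositions $A\cong S\ltimes M$ and $A'\cong S\ltimes N$; the grading from the previous step forces $M$ and $N$ to sit in a single graded degree, hence to square to zero, so these are genuine split extensions. The remaining point, $N\cong D(M)$ as $S$-$S$-bimodules, is where condition (b)(ii) enters: by part (3) of Theorem~\ref{thm:1.1} each path $u$ built from the cut arrows $\alpha_1,\dots,\alpha_r$ is paired, via completion along an elementary cycle, with a supplement $q$ whose composite $uq$ spans a socle line, and (b)(ii) says these supplements index a basis of $\soc_{A'^e}A'$, i.e.\ a basis of $N$. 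Recording the coefficient of the socle generator in $uq$ defines a pairing $M\times N\to K$ which one checks is $S$-balanced and nondegenerate, giving $N\cong D(M)$. This duality is the step I expect to be the main obstacle: one must show that a \emph{mixed} admissible cut on a trivial extension trades a bimodule for its $K$-dual, the delicate bookkeeping being how a single elementary cycle of $Q_{T(A)}$ is divided by the cut into an \emph{$M$-arc} and a \emph{$D(M)$-arc}, so that the supplement of each cut arrow meets the socle basis exactly once and the form is perfect rather than merely one-sided nondegenerate; it is precisely the self-duality of the trivial extension carried by the socle that makes the pairing $S$-balanced, and pinning this down via Theorem~\ref{thm:1.1} is the crux.

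For $(c)\Rightarrow(a)$ I would argue directly and symmetrically. From $A=S\ltimes M$ and $A'=S\ltimes D(M)$ one computes $D(A)=D(S)\oplus D(M)$ and $D(A')=D(S)\oplus D(D(M))\cong D(S)\oplus M$ as $S$-$S$-bimodules, so both $T(A)=A\ltimes D(A)$ and $T(A')=A'\ltimes D(A')$ carry the common underlying bimodule $S\oplus M\oplus D(M)\oplus D(S)$. I then write down the $K$-linear map that is the identity on the summands $S$ and $D(S)$ and interchanges $M$ with $D(M)$ through the canonical identification $D(D(M))\cong M$, and verify that it is an algebra isomorphism by checking compatibility on each type of product: products inside $S$, the two actions of $S$ on $M$ and on $D(M)$, the pairing $M\cdot D(M)\to D(S)$, and all products landing in the socle copy $D(S)$. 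This exhibits $T(A)\cong T(A')$ without appeal to Wakamatsu's theorem, closing the cycle and, as a cross-check, agreeing with the shortcut $(b)\Rightarrow(a)$ afforded by Theorem~\ref{thm:1.2}.
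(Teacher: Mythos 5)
Your plan follows essentially the same route as the paper's proof: (a)$\Leftrightarrow$(b) via the cut characterisation (Corollary~\ref{cor:isomorphic_trivial_extensions}, resting on Theorems~\ref{thm:ideal} and~\ref{thm:characterisation_trivial_extensions}); (b)$\Rightarrow$(c) by analysing $S$, $M$ and $N=S\langle\beta_{s+1},\dots,\beta_t\rangle S$ inside $T(A)$ and identifying $N$ with $D(M)$ through supplements in elementary cycles; (c)$\Rightarrow$(a) from the symmetric decompositions $T(A)=S\ltimes M\ltimes(DS\oplus DM)$ and $T(A')=S\ltimes DM\ltimes(DS\oplus M)$. The one genuine gap is exactly the step you flag as the crux: perfection of the pairing $M\times N\to K$. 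A pairing that is merely nondegenerate on one side gives only an injection $N\hookrightarrow D(M)$, and your sketch does not actually produce the reverse inequality of dimensions. The paper does not attempt a two-sided combinatorial argument; it proves only that $f\colon N\to D(M)$, $f(w)(m)=w(m)$, is injective (by completing a nonzero path of $N$ to an elementary cycle whose supplement lies in $M$), and then gets surjectivity for free from a dimension count: since (b)$\Rightarrow$(a) is already available --- as you yourself note via Theorem~\ref{thm:1.2} --- one has $T(A)\cong T(A')$, hence $\dim_K A=\dim_K A'$ and $\dim_K M=\dim_K N=\dim_K D(M)$, so the injection is an isomorphism. Adopting this shortcut would close your gap.

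Two further slips to repair. In (b)$\Rightarrow$(c) you invoke ``the grading from the previous step'' to conclude $M^2=0=N^2$; that grading was constructed in the proof of (a)$\Rightarrow$(b) and is not part of hypothesis (b), so it cannot be used here. The facts do follow from (b) alone: $M\subseteq\langle\C\rangle$, which squares to zero because it is the complement of $A'$ in the trivial-extension structure on $T(A)$, and $N\subseteq\langle\beta_1,\dots,\beta_t\rangle=D(A)$, which squares to zero in $T(A)=A\ltimes D(A)$. Finally, your (a)$\Rightarrow$(b) tacitly assumes that the degree-one part of the transported grading is spanned by a subset of the \emph{given} arrows of $Q_{T(A)}$; an algebra isomorphism can send an arrow to a linear combination of paths, so in general one must pass to a new presentation of $T(A)$ adapted to the grading, which is why Corollary~\ref{cor:isomorphic_trivial_extensions} is phrased in terms of an admissible cut of \emph{some} presentation of $T(A)$.
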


\section{Background}\label{sec:background}

In this paper, by an algebra we mean a basic connected finite-dimensional algebra over an algebraically closed field $K$.
It is well-known that every such algebra $A$ is Morita equivalent to the path algebra of a (finite) quiver $Q$ modulo an admissible ideal $I$ of
$KQ$.
For the purposes of this paper, we always identify $A$ with $KQ_A/I_A$.
If $A=KQ_A/I_A$, we say that $Q_A$ is the quiver of $A$.
In addition, every element of a set of generators of $I_A$ is called a \textit{relation} of $A$ and we refer to $I_A$ as the ideal of relations of $A$.

Given a quiver $Q$, we denote by $Q_0$ the set of vertices of $Q$ and $Q_1$ the set of arrows of $Q$.
For every arrow $\alpha \in Q_1$, $s(\alpha)$ and $t(\alpha)$ denote the source and the target of $\alpha$, respectively.
A \textit{path} $p$ of length $t$ from the vertex $x$ to the vertex $y$ in $Q$ is an ordered set of arrows $\{\alpha_1, \dots, \alpha_t\}$ such that $s(\alpha_1)=x$, $t(\alpha_t)=y$ and $t(\alpha_i)=s(\alpha_{i+1})$ for all $1 \leq i \leq t-1$.
By abuse of notation we write $p = \alpha_1\alpha_2 \dots \alpha_t$ and we define $s(p):=s(\alpha_1)$ and $t(p):=t(\alpha_t)$.
Also, for every vertex $x \in 
(Q_A)_0$ we denote by $e_x$ the stationary path at vertex $x$, that is  the unique path of length $0$ from $x$ to $x$.
We say that a path $p$ is a \textit{cycle} if $p$ is of  strictly positive length and $s(p)=t(p)$.
A cycle $p$ is said to be \textit{non-zero} if $p \not \in I_A$.

Let $A$ be an algebra and $M$ be a $A$-$A$-bimodule equipped with a multiplication map $\mu: M\otimes_A M \to M$.
Then the $K$-vector space $B= A\oplus M$ has  the structure of an algebra with the following multiplication
$$(a_1, m_1)(a_2, m_2)= (a_1a_2, a_1m_2 + m_1a_2 + \mu(m_1\otimes m_2)),$$
where $a_1, a_2 \in A$ and $m_1, m_2 \in M$.
If the map $\sigma: A \to B$ defined by $\sigma(a)= (a,0)$ is a morphism of algebras we say that $B$ is a \textit{split extension of $A$ by $M$}.
Moreover, if $M$ is nilpotent for $\mu$ we say that $B$ is a \textit{split-by-nilpotent extension of $A$ by $M$} and we denote it as $B= A*M$.
If $\mu(m_1, m_2)=0$ for all $m_1, m_2 \in M$ we say that $B$ is the \textit{trivial extension of $A$ by $M$} and we denote it by $A\ltimes M$.
Finally, if $M=D(A)$ with its natural bimodule structure, where $D=\Hom_A(-, K)$, we say that $B=A\ltimes D(A)$ is simply the \textit{trivial extension} of $A$ and we denote it
by $T(A)$.

We recall from \cite{Fernandez2002} an explicit description of the ordinary quiver of
$T(A)$ based on the quiver of $A$.  This description depends on a $K$-basis $\M=\{p_1, \dots, p_n\}$ of $\soc_{A^e}A$. 
The quiver $Q_{T(A)}$ is then defined as follows.
The set of vertices $(Q_A)_0$ and $(Q_{T(A)})_0$ coincide.
The set of arrows $(Q_{T(A)})_1$ is the disjoint union of $(Q_A)_1$ and $\{\beta_{p_1}, \dots, \beta_{p_n}\}$ where $s(\beta_{p_i})=t(p_i)$ and $t(\beta_{p_i})=s(p_i)$ for every $p_i \in \M$.
A set of generators of the ideal of relations $I_{T(A)}$ of $T(A)$ is given in Theorem~\ref{thm:ideal}.

Finally, we recall the notion of admissible cut and elementary cycles, introduced in \cite{Fernandez2002} to study when two algebras of a certain family have isomorphic trivial extension.

\begin{definition}\label{def:elementary}
Let $\M=\{p_1, \dots, p_n\}$ be a basis of $\soc _{A^e} A$. An oriented cycle $C=\alpha_1\dots\alpha_t\beta_{p_j}$ in $KQ_{T(A)}$ is said to be \textit{elementary} if $\alpha_1
\dots \alpha_n$ is a path in $KQ_A$, $p_j\in \M$ and $p_j^*(\alpha_1\dots\alpha_t)\neq 0$, where $p_j^*$ is the dual of $p_j$.
In this case, we say that $\omega(C)=p_j^*(\alpha_1\dots\alpha_t)\in K$ is the \textit{weight} of $C$.
\end{definition}

\begin{definition}\label{def:supplement}
Let $Q$ be a quiver, $C$ a cycle of $Q$, and $q$ a path contained in $C$. The \emph{supplement}  of $q$ in $C$ is the path $p$ such that $C = qp$ up to rotation. Note that if $C =q$ then $p$ is the stationary path $e_{(s(q))}$.

\end{definition}

\begin{definition}\label{def:admissible cut}
Let $A= KQ/I$ be a finite-dimensional algebra. Then an \emph{admissible cut $\C$ of $T(A)$} is a subset of arrows of $Q_{T(A)}$   containing  exactly one arrow in each
elementary cycle of $T(A)$ in such a way that if an arrow in $\C$ appears in an elementary cycle $C$ then it appears only once in $C$. 
\end{definition}

\begin{remark}\label{rmk:trivial_cut}
The set of new arrows $\C=\{\beta_{p_1}, \dots, \beta_{p_j}\}$ added in the construction of the quiver $Q_{T(A)}$ form an admissible cut of $T(A)$.
Moreover, it is easy to see that $A$ is isomorphic to the quotient $T(A)/\langle \C \rangle$ where $\langle \C \rangle$ is the two-sided of $T(A)$ ideal generated by $\C$.
\end{remark}

We now give three examples. 
In the first example, we show that the weights of the elementary cycles are not always equal to one, in the second we show that not every arrow in an elementary cycle is included in an admissible cut.
In the third example, we show that not every admissible cut $\C$ of $T(A)$ gives rise to an algebra $B = T(A) / \langle \C \rangle $ such that $T(A)$ is a split-by-nilpotent extension of $B$.

\begin{example}
Consider the algebra $A=KQ_A/I_A$ with quiver 
\[\begin{tikzcd}
	&&& 2 \\
	{Q_A:=} & 1 && 3 && 5 \\
	&&& 4
	\arrow["{\alpha_1}", from=2-2, to=1-4]
	\arrow["{\gamma_1}", from=2-2, to=2-4]
	\arrow["{\epsilon_1}"', from=2-2, to=3-4]
	\arrow["{\gamma_2}", from=2-4, to=2-6]
	\arrow["{\alpha_2}", from=1-4, to=2-6]
	\arrow["{\epsilon_2}"', from=3-4, to=2-6]
\end{tikzcd}\]
quotiented by the ideal $I_A= \langle \lambda_1 \alpha_1 \alpha_2 + \lambda_2 \gamma_1 \gamma_2 + \lambda_3 \epsilon_1 \epsilon_2 \rangle$. 
Then  soc$_{A^e} A$  is two dimensional with basis $\{p_1= \gamma_1 \gamma_2, p_2=\epsilon_1 \epsilon_2\}$.
Then $T(A)= KQ_{T(A)}/ I_{T(A)}$ where 
\[\begin{tikzcd}
	&&& 2 \\
	\\
	{Q_{T(A)}:=} & 1 && 3 && 5 \\
	&&  \\
	&&& 4
	\arrow["{\alpha_1}", from=3-2, to=1-4]
	\arrow["{\gamma_1}", from=3-2, to=3-4]
	\arrow["{\epsilon_1}"', from=3-2, to=5-4]
	\arrow["{\gamma_2}", from=3-4, to=3-6]
	\arrow["{\alpha_2}", from=1-4, to=3-6]
	\arrow["{\epsilon_2}"', from=5-4, to=3-6]
	\arrow["{\beta_{p_1}}"', bend right, from=3-6, to=3-2]
	\arrow["{\beta_{p_2}}"', bend left, from=3-6, to=3-2]
\end{tikzcd}\]
In this case we have four different elementary cycles, $C_1 = \beta_{p_1}\gamma_1\gamma_2$, $C_2= \beta_{p_2}\epsilon_1\epsilon_2$, $C_3 = \beta_{p_1}\alpha_1\alpha_2$ and $C_4= \beta_{p_2}\alpha_1\alpha_2$.
We calculate 
$$w(C_1)= p_1^*(\gamma_1\gamma_2)=1$$
$$w(C_2)= p_2^*(\epsilon_1\epsilon_2)=1$$
$$w(C_3)= p_1^*(\alpha_1\alpha_2)=p_1^*(-\lambda_2 \gamma_1\gamma_2 - \lambda_3 \epsilon_1\epsilon_2) = -\frac{\lambda_2}{\lambda_1}$$
$$w(C_4)= p_2^*(\alpha_1\alpha_2)=p_2^*(-\lambda_2 \gamma_1\gamma_2 - \lambda_3 \epsilon_1\epsilon_2) = -\frac{\lambda_3}{\lambda_1}.$$
\end{example}

\begin{example}\label{ex:nonelementarycycle}
Let $Q$ be the quiver
\[
\begin{tikzcd}[column sep= 35pt]
1 \arrow[out=210,in=140,loop, "\alpha"] \ar[r, "\beta", shift left]& 2
\end{tikzcd}\]
and $I$ the ideal generated by $\alpha^3$. Denote by $A$ the algebra $KQ/I$.
Then the set $\M=\{\alpha^2\beta\}$ is a $K$-basis for $\soc_{A^e}A$ and $Q_{T(A)}$ is the following quiver
\[
\begin{tikzcd}[column sep= 35pt]
1 \arrow[out=210,in=140,loop, "\alpha"] \ar[r, "\beta", shift left=1ex]& 2 \ar[l,shift left=1ex, "\beta_{1}"]
\end{tikzcd}\]

Then $C=\beta_1\alpha^2\beta$ is the only elementary cycle in $KQ_{T(A)}$, up to cyclic permutations. It follows from Definition~\ref{def:admissible cut} that the only admissible cuts are $\{\beta\}$ and $\{\beta_1\}$.

Observe that if we consider the set $\{\alpha\}$ as a cut, then $A/\langle \alpha \rangle$ is
isomorphic to the algebra $B=KQ'/I'$ where $Q'$ is the quiver
\[
\begin{tikzcd}
1 \ar[r, "\beta", shift left=1ex]& 2 \ar[l,shift left=1ex, "\beta_{1}"]
\end{tikzcd}\]
and $I'=(\beta_1\beta)$, which is an algebra of dimension 6 and therefore $T(B)$ is an algebra of dimension 12, while $T(A)$ is a 14-dimensional algebra. Hence $T(A)$ is not isomorphic to $T(B)$.

We note that in this example there are nonzero cycles in $T(A)$ which are not elementary cycles, namely $\alpha$, $\alpha^2$ and $\beta\beta_1$.
\end{example}

\begin{example}
Let $A$ be an algebra given by $Q$
\begin{center}
\begin{tikzcd}
1 \ar{ddrr}{\epsilon_1} & &  && 6 \ar{rrdd}{\alpha_2} && \\
2 \ar{drr}{\epsilon_2} & & && 7 \ar{rrd}{\beta_2} && \\  [-15pt]
  & & 5 \ar{uurr}{\alpha_1}\ar{urr}{\beta_1}\ar{drr}{\gamma_1}\ar[swap]{ddrr}{\eta_1}  && && 10\\  [-15pt]
3 \ar{urr}{\epsilon_3} & & && 8 \ar{urr}{\gamma_2} &&\\
4 \ar[swap]{uurr}{\epsilon_4}& & && 9 \ar[swap]{uurr}{\eta_2}&&
\end{tikzcd}
\end{center}
and the relations $$\alpha_1\alpha_2 +\beta_1\beta_2 +\gamma_1\gamma_2+ \eta_1\eta_2=0$$
$$\epsilon_1\gamma_1\gamma_2=0, \, \epsilon_1\eta_1\eta_2=0, \,\epsilon_2\gamma_1\gamma_2=0, \, \epsilon_2\eta_1\eta_2=0,$$ $$\epsilon_3\alpha_1\alpha_2=0, \,
\epsilon_3\beta_1\beta_2=0, \, \epsilon_4\alpha_1\alpha_2=0,\, \epsilon_4\beta_1\beta_2=0 .$$

Then the following elements are a $K$-basis for $\soc_{A^e}A$.

\begin{eqnarray*}
&\epsilon_1\alpha_1\alpha_2= -\epsilon_1\beta_1\beta_2, \, \epsilon_2\alpha_1\alpha_2=-\epsilon_2\beta_1\beta_2,\, \epsilon_4\gamma_1\gamma_2=-\epsilon_4\eta_1\eta_2\\
& \epsilon_3\gamma_1\gamma_2=-\epsilon_3\eta_1\eta_2,\, \alpha_1\alpha_2-\beta_1\beta_2,\,
\gamma_1\gamma_2-\eta_1\eta_2, \\
& \epsilon_1\gamma_1,\, \epsilon_1\eta_1,\,
\epsilon_2\gamma_1, \, \epsilon_2\eta_1, \, \epsilon_3\alpha_1, \, \epsilon_3\beta_1, \, \epsilon_4\alpha_1, \textrm{\, and\, } \epsilon_4\beta_1
\end{eqnarray*}

The quiver $Q_{T(A)}$ is the following.

\begin{center}
\begin{tikzcd}
8 \ar[bend left]{rrr}{\delta_{7}} \ar[bend left]{rrrd}{\delta_{8}} && & 1 \ar{ddrr}{\epsilon_1} & &  && 6 \ar{rrdd}{\alpha_2}  && \\
9 \ar[swap]{rrr}{\delta_{9}} \ar[swap]{rrru}{\delta_{10}}&& & 2 \ar{drr}{\epsilon_2} & & && 7 \ar{rrd}{\beta_2}  && \\  [-15pt]
 & & & & & 5 \ar{uurr}{\alpha_1}\ar{urr}{\beta_1}\ar[swap]{drr}{\gamma_1}\ar[swap]{ddrr}{\eta_1}  && && 10  \ar[bend left=60, swap]{lllllld}{\delta_3} \ar[bend
 right=60,]{llllllu}{\delta_2} \ar[bend right=60, swap]{lllllluu}{\delta_1} \ar[bend left=60]{lllllldd}{\delta_4} \ar[shift right, swap, shorten <= 1em, shorten >=
 1em]{llll}{\delta_5} \ar[shift left, shorten <= 1em, shorten >= 1em]{llll}{\delta_6} \\  [-15pt]
6 \ar[swap]{rrr}{\delta_{11}} \ar[swap]{rrrd}{\delta_{13}}& && 3 \ar{urr}{\epsilon_3} & & && 8 \ar[swap]{urr}{\gamma_2}  &&\\
7 \ar[bend right, swap]{rrru}{\delta_{12}} \ar[bend right, swap]{rrr}{\delta_{14}} & &&4 \ar[swap]{uurr}{\epsilon_4}& & && 9 \ar[swap]{uurr}{\eta_2} &&
\end{tikzcd}
\end{center}
Where the vertices labelled by $6,7,8,9$ should be identified.

There are 18 elementary cycles, which are listed below.
\begin{equation*}
\begin{array}{llll}
C_1=\epsilon_1\alpha_1\alpha_2\delta_1, & C_2=\epsilon_1\beta_1\beta_2\delta_1, & C_3=\epsilon_2\beta_1\beta_2\delta_2, & C_4=\epsilon_2\alpha_1\alpha_2\delta_2,\\
C_5= \epsilon_3\gamma_1\gamma_2\delta_3, & C_6=\epsilon_3\eta_1\eta_2\delta_3, &C_7=\epsilon_4\gamma_1\gamma_2\delta_4, & C_8=\epsilon_4\eta_1\eta_2\delta_4,\\
C_9=\alpha_1\alpha_2\delta_5, & C_{10}= \gamma_1\gamma_2\delta_6, & C_{11}=\epsilon_1\gamma_1\delta_7, & C_{12}=\epsilon_2\gamma_1\delta_8\\
C_{13}=\epsilon_1\eta_1\delta_9, & C_{14}=\epsilon_2\eta_1\delta_{10}, &
C_{15}= \epsilon_3\alpha_1\delta_{11},& C_{16}=\epsilon_3\beta_1\delta_{12}\\
C_{17}=\epsilon_4\alpha_1\delta_{13},& C_{18}=\epsilon_4\beta_{1}\delta_{14}& &
\end{array}
\end{equation*}

Observe that the set $\C=\{\alpha_1, \beta_1, \delta_3, \delta_4, \delta_6,\delta_7, \delta_8, \delta_9, \delta_{10} \}$ is an admissible cut of $T(A)$, however $B=T(A)/\langle
\C \rangle$ is not a subalgebra of $T(A)$ since $\gamma_1\gamma_2+ \eta_1\eta_2=0$ is the induced relation from the minimal relation
$\alpha_1\alpha_2+\beta_1\beta_2+\gamma_1\gamma_2+\eta_1\eta_2=0$ in $A$ and therefore $T(A)$ is not a split-by-nilpotent extension of $B$ by \cite[Theorem 2.5]{ACT}.
\end{example}

\section{Characterisation of trivial extensions}
\label{sec:characterisation}
In this section we give a complete characterisation of the algebras that are isomorphic to the trivial extension of some finite dimensional $K$-algebra.

Before proving the main result of this section, Theorem~\ref{thm:characterisation_trivial_extensions}, we introduce the necessary terminology and prove some preliminary results.

\begin{definition}\label{allowablecut2}
Let $A=KQ/I$ be an algebra and let $\mathcal E = \{C_1, \dots, C_n\}$ be a set of non-zero cycles of length at least two. 
Then an \emph{allowable cut $\C$ of $A$ with respect to $\mathcal{E}$} is a subset of arrows of $Q_{A}$ containing  exactly one arrow in each cycle of $\mathcal{E}$ in such a way that if an arrow in $\C$ appears in a cycle $C$ of $\mathcal{E}$ then it appears only once in $C$. 
\end{definition}

\begin{remark}
If $A$ is a trivial extension of an algebra $A'$, then any admissible cut $\C$ of $A$ is an allowable cut with respect the set of elementary cycles.
\end{remark}

In what follows, whenever the set $\mathcal{E}$ of cycles is clear from the context, we will simply say that $\C$ is an allowable cut of $A$, omitting the reference to $\mathcal{E}$.

\begin{definition}
Let $A=KQ_A/I_A$ be an algebra and consider a set $\mathcal{E}$ of non-zero cycles of length at least two and a weight function $\omega: \mathcal{E} \to K$. 
Then for every vertex $x \in (Q_A)_0$ we define $I_x^{\mathcal{E}}$ to be the two-sided ideal in $KQ$ generated by
\begin{enumerate}[label=(\roman*)]
\item Oriented cycles starting and ending at $x$ that do not belong to $\mathcal E$.
\item The elements of the form $\omega (C')C-\omega (C)C'$, where $C, C'\in \mathcal E$.
\end{enumerate}
\end{definition}

\begin{definition}\label{def:distinguished}
Let $A=KQ/I$ be an algebra. 
A finite set $\mathcal{E}$ of non-zero cycles of length at least two is said to be a set of \textit{distinguished cycles} if it admits an allowable cut $\C =\{\alpha_1, \cdots \alpha_r \}$ of $A$ and a weight function $\omega : \mathcal{E} \to K$ such that the following holds.
\begin{enumerate}[label=(\roman*)]
    \item  Every path not contained in a cycle $C$ of $\mathcal E$ is an element of $I$.
    
    \item If $\rho$ is a linear combination of paths in $e_xKQ_{B}e_y$ such that $q \rho\in I$ (or $\rho q\in I$) for every supplement path $q$ in a cycle $C\in \mathcal{E}$, then $\rho$ is an element of $I$.

    \item If $\rho$ is a linear combination of paths in $e_xKQ_{A}e_y$ such that each summand of $\rho$ is an element of $\langle \C\rangle$ and $\rho q \in I^{\mathcal E}_y$ or $q \rho\in I^{\mathcal E}_x$ for every supplement $q$ in a cycle $C\in \mathcal{E}$, then $\rho$ is an element of $I$.
\end{enumerate}
\end{definition}

\begin{lemma}\label{lem:Split_extension_and_isomorphism}
Let $A$ and $A'$ be Artin algebras such that $T(A)\cong T(A')$. Then $T(A)$ is a split-by-nilpotent extension of $A'$.
\end{lemma}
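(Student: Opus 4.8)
The plan is to exploit the fact that the trivial extension is a symmetric algebra, and in particular that $T(A')$ comes equipped with a canonical split surjection onto $A'$. First I would recall the structure of the trivial extension: by definition $T(A') = A' \ltimes D(A')$, so there is an inclusion $\sigma\colon A' \to T(A')$, $a \mapsto (a,0)$, which is a morphism of algebras, and a projection $\pi\colon T(A') \to A'$, $(a,m)\mapsto a$, with $\pi \circ \sigma = \mathrm{id}_{A'}$. Moreover the kernel of $\pi$ is the bimodule $D(A')$, which is nilpotent for the (zero) multiplication on the bimodule part. This already shows $T(A')$ is a split-by-nilpotent extension of $A'$ in the sense of the Background section, witnessed by $T(A') = A' * D(A')$. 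The isomorphism $T(A) \cong T(A')$ then transports this structure: composing the given isomorphism $\phi\colon T(A) \to T(A')$ with $\pi$ yields a surjection $\pi \circ \phi\colon T(A) \to A'$, and composing $\sigma$ with $\phi^{-1}$ yields a section $\phi^{-1}\circ\sigma\colon A' \to T(A)$.

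The key steps, in order, are as follows. I would verify that $\rho := \pi \circ \phi$ is a surjective algebra morphism and that $\tau := \phi^{-1}\circ \sigma$ is an algebra morphism with $\rho \circ \tau = \pi \circ \phi \circ \phi^{-1}\circ\sigma = \pi \circ \sigma = \mathrm{id}_{A'}$, so that $\tau$ splits $\rho$. Next I would identify the kernel $N := \ker(\rho) = \phi^{-1}(D(A'))$ and check that it is a two-sided ideal of $T(A)$ which is nilpotent: since $D(A')$ has square zero as an ideal of $T(A')$ (the bimodule multiplication is trivial) and $\phi^{-1}$ is an algebra isomorphism, $N^2 = \phi^{-1}(D(A')^2) = \phi^{-1}(0) = 0$, so $N$ is nilpotent. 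With the splitting $\tau$ giving $T(A) \cong A' \oplus N$ as $A'$-$A'$-bimodules (transporting the decomposition $T(A') = A' \oplus D(A')$ through $\phi$), this exhibits $T(A)$ as a split-by-nilpotent extension $T(A) = A' * N$ with $N \cong D(A')$ as an $A'$-$A'$-bimodule.

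I do not expect a serious obstacle here; the statement is essentially a formal consequence of the definitions of trivial extension and of split-by-nilpotent extension, combined with the transport of structure along an algebra isomorphism. The only point requiring minor care is to confirm that the multiplication induced on $N$ by the bimodule structure coming from the splitting is genuinely nilpotent, which follows because it is carried isomorphically onto the zero multiplication on $D(A')$; equivalently, one checks that the bimodule square-zero condition is preserved by $\phi^{-1}$. Thus the argument reduces to chasing the canonical split projection of $T(A')$ through the isomorphism $\phi$ and reading off that the resulting extension of $A'$ inside $T(A)$ is split and has square-zero (hence nilpotent) kernel.
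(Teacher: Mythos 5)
Your argument is correct and coincides with the paper's own proof: both transport the canonical split short exact sequence $0 \to D(A') \to T(A') \to A' \to 0$ through the given isomorphism to obtain a split surjection $T(A) \to A'$ with nilpotent kernel. Your additional check that the kernel has square zero is a slightly more explicit verification of nilpotency than the paper records, but the route is the same.
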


\begin{proof}
Since $T(A')$ is a split-by-nilpotent extension of $A'$, there exists a short exact sequence of $A'$-$A'$-bimodules
\begin{center}
\begin{tikzcd}
0\ar{r} & DA' \ar{r}{i} & T(A') \ar[shift right, swap]{r}{\pi} & A' \ar{r} \ar[shift right, swap]{l}{\iota} & 0
\end{tikzcd}
\end{center}
such that $\pi\circ \iota= Id_{A'}$.

Now, let $f: T(A')\to T(A)$ be an isomorphism of $K$-algebras. Consider $\pi\circ f^{-1}: T(A)\to A'$ and $f\circ \iota: A' \to T(A)$. Then we have a short exact sequence
\[
\begin{tikzcd}
0\ar{r} & \operatorname{ker}(\pi\circ
f^{-1}) \ar{r} & T(A) \ar[shift right, swap]{r}{\pi\circ f^{-1}} & A' \ar{r} \ar[shift right, swap]{l}{f\circ \iota} & 0
\end{tikzcd}
\]
Moreover, $(\pi\circ f^{-1}) \circ (f \circ \iota)=Id_{A'}$, then $T(A)$ is a split-by-nilpotent extension of $A'$.
\end{proof}

\begin{theorem}\label{thm:characterisation_trivial_extensions}
Let $A=KQ/I$ be an algebra. 
Then $A$ is isomorphic to the trivial extension of some finite-dimensional $K$-algebra if and only if:

\begin{enumerate}
    \item[(a)] There is a presentation of $A$ for which there exists a set $\E$ of distinguished cycles in $Q_A$ with weight function $\omega: \E \to K$ and
    \item[(b)] there is an allowable cut $\C =\{\gamma_1, \cdots, \gamma_t \}$ of $A$ such that the quotient $B=A/\langle \C\rangle$ verifies the following:
\begin{enumerate}
\item [(i)] $A$ is a split-by-nilpotent extension of $B$.
\item [(ii)] The supplements of the cut arrows in the cycles in $\mathcal{E}$ are in one-to-one correspondence with the elements of a basis of $\soc_{B^e} B$.
\end{enumerate}
\end{enumerate}
In this case $A$ is isomorphic to $T(B)$ and the two-sided ideal $\langle \C \rangle$ is isomorphic to $DB$ as a $B$-$B$-bimodule.
\end{theorem}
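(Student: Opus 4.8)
The plan is to prove the two implications separately, using Theorem~\ref{thm:ideal} as the bridge between the abstract conditions (a)--(b) and the explicit presentation of a trivial extension.

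For the forward implication, suppose $A\cong T(A')$ for some finite-dimensional algebra $A'$. I would fix once and for all the presentation of $A$ afforded by Theorem~\ref{thm:ideal}, coming from a chosen $K$-basis $\M=\{p_1,\dots,p_n\}$ of $\soc_{A'^e}A'$; take $\E$ to be the set of elementary cycles of $T(A')$ and $\omega$ the weight function of Definition~\ref{def:elementary}, and take the cut $\C=\{\beta_{p_1},\dots,\beta_{p_n}\}$ of new arrows. By Remark~\ref{rmk:trivial_cut} this is an admissible cut with $B:=A/\langle\C\rangle\cong A'$, hence an allowable cut with respect to $\E$. Conditions (i)--(iii) of Definition~\ref{def:distinguished} then hold because Theorem~\ref{thm:ideal} exhibits exactly the listed elements as generators of $I_{T(A')}=I_A$, so in particular $I_A$ contains them; thus $\E$ is a set of distinguished cycles. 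Property (b)(i) is immediate since $T(A')=A'\ltimes DA'$ is by definition split-by-nilpotent over $A'=B$, and (b)(ii) holds because the supplement of $\beta_{p_i}$ in the elementary cycle $\beta_{p_i}p_i$ is precisely $p_i$, so these distinguished supplements realise the basis $\M$ of $\soc_{B^e}B$. Finally $A=T(A')=T(B)$ and $\langle\C\rangle$ is the copy of $DA'=DB$, giving the last assertions for free.

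For the converse, assume (a) and (b) and set $B=A/\langle\C\rangle$ and $N=\langle\C\rangle$. First I would record that, since $A$ is a split-by-nilpotent extension of $B$ by (b)(i), there is a $B$-$B$-bimodule decomposition $A\cong B\oplus N$ with $N$ nilpotent. The key reduction is to show that $N^2=0$: a nonzero element of $N^2$ is a combination of paths traversing at least two cut arrows, and since $\C$ is an allowable cut every cycle of $\E$ contains exactly one cut arrow, so such a path is a subpath of no cycle of $\E$ and is therefore zero in $A$ by Definition~\ref{def:distinguished}(i). Hence $A=B\ltimes N$ is a square-zero extension, and because the trivial extension $B\ltimes M$ depends on $M$ only through its isomorphism type as a $B$-$B$-bimodule, it suffices to produce an isomorphism $N\cong DB$, after which $A\cong B\ltimes DB=T(B)$.

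To build this isomorphism I would go through the explicit model of $T(B)$ given by Theorem~\ref{thm:ideal} applied to $B$ with the basis $\M$ of $\soc_{B^e}B$ supplied by (b)(ii). Writing $q_i$ for the distinguished supplement of the cut arrow $\gamma_i$, the equalities $s(\beta_{q_i})=t(q_i)=s(\gamma_i)$ and $t(\beta_{q_i})=s(q_i)=t(\gamma_i)$ identify each new arrow $\beta_{q_i}$ of $Q_{T(B)}$ with $\gamma_i$, while the remaining arrows of $Q_{T(B)}$ are exactly the non-cut arrows of $Q_A$; this yields an isomorphism of quivers $Q_A\cong Q_{T(B)}$ carrying $\E$ to the elementary cycles of $T(B)$ and $\omega$ to their weights. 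Under this identification I would match the two ideals against the three families of generators of $I_{T(B)}$ in Theorem~\ref{thm:ideal}: the generators of $I_B$ lift to relations of $A$ through $B=A/\langle\C\rangle$; the paths lying in no elementary cycle are handled by Definition~\ref{def:distinguished}(i); and the combinations $\rho$ of the third family are exactly those controlled by Definition~\ref{def:distinguished}(ii) and (iii), the latter accounting for the weight relations $\omega(C')C-\omega(C)C'$ encoded in the ideals $I_x^{\mathcal E}$.

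The main obstacle is this last ideal comparison, and in particular the inclusion $I_A\subseteq I_{T(B)}$: whereas the conditions of Definition~\ref{def:distinguished} immediately give $I_{T(B)}\subseteq I_A$, since they assert that each generator produced by Theorem~\ref{thm:ideal} already lies in $I_A$, the reverse inclusion requires showing that $A$ carries no relations beyond those forced on $T(B)$. I expect to obtain this either by a dimension count, proving $\dim_K N=\dim_K DB=\dim_K B$ so that the inclusion $I_{T(B)}\subseteq I_A$ of equal codimension forces equality, or by arguing directly that any relation of $A$ supported on paths through a cut arrow is detected by the supplements and hence falls under Definition~\ref{def:distinguished}(iii). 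Once $I_A=I_{T(B)}$ is established, the resulting isomorphism $A\cong T(B)$ restricts on $N=\langle\C\rangle$ to the subbimodule generated by the new arrows, namely $DB$, which simultaneously proves $\langle\C\rangle\cong DB$ as $B$-$B$-bimodules.
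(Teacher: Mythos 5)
Your overall strategy coincides with the paper's: the forward direction via Theorem~\ref{thm:ideal}, taking $\E$ to be the elementary cycles and the cut to be the set of new arrows, is exactly the paper's argument, and for the converse both proofs aim to identify $Q_A$ with $Q_{T(B)}$ by matching each cut arrow $\gamma_i$ with a new arrow $\beta_{q_i}$ and then to compare the two ideals against the generators listed in Theorem~\ref{thm:ideal}. Your explicit verification that $\langle\C\rangle^2=0$ (a path traversing two cut arrows lies in no cycle of $\E$, hence lies in $I$ by Definition~\ref{def:distinguished}(i)) is a welcome detail that the paper uses only implicitly when it writes $A=B\ltimes\langle\C\rangle$.

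The gap is the step you yourself flag and then defer: the isomorphism $N=\langle\C\rangle\cong DB$, equivalently $I_A=I_{T(B)}$. Neither of your two proposed routes is carried out, and the dimension count as stated is circular: to run it you need $\dim_K N=\dim_K B$, i.e.\ $\dim_K N\geq\dim_K B$, which is precisely the surjectivity of the comparison map $DB\to N$ you are trying to establish. Moreover, your preliminary claim that the quiver identification carries the elementary cycles of $T(B)$ onto the cycles of $\E$ (needed before $I_{T(B)}\subseteq I_A$ can be read off from Definition~\ref{def:distinguished}) already presupposes that the correspondence in (b)(ii) is realised geometrically by the supplement relation, which is the same issue. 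The paper closes this hole by a double inclusion $\langle\C\rangle\subseteq DB\subseteq\langle\C\rangle$: the first inclusion identifies each cut arrow $\gamma$ with $p^*$ for the socle element $p$ attached to its supplement; the harder second inclusion shows that for each basis element $p$ of $\soc_{B^e}B$ the associated path $q_p$ admits a supplement in a distinguished cycle of $A$ that is a \emph{single} arrow of $\C$ --- if every supplement had the form $u\gamma v$ with $u$ or $v$ of positive length, then $pr=0$ or $rp=0$ for every supplement $r$, so Definition~\ref{def:distinguished}(ii) would force $p\in I_A$ and hence $p\in I_B$, contradicting $p\neq 0$ in $\soc_{B^e}B$; uniqueness of that arrow supplement follows from Definition~\ref{def:distinguished} together with admissibility of $I_A$. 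You need an argument of this kind (or an independent proof that $\dim_K\langle\C\rangle\geq\dim_K B$) to complete the converse; once it is in place, the rest of your proposal goes through as in the paper.
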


\begin{proof}
Suppose first that $A$ is isomorphic to $T(B)$.
Then it follows from Theorem~\ref{thm:ideal} that for any $K$-basis $\M$ for $\soc_{B^e}B$, the set of elementary cycles $\E$ induced by $\M$ is a set of distinguished cycles and the set $\{\beta_p: p \in \M\}$ is an allowable cut of $A$ with respect to $\E$. 
Moreover, (b)(i) and (b)(ii) hold by construction.

\bigskip
To prove the converse, we show that $\langle \C \rangle \cong DB$ as $B$-$B$-bimodules and $A\cong T(B)$.
By Lemma~\ref{lem:Split_extension_and_isomorphism} we can assume that the presentation of $A$ is as in the statement.

Let us see first that $\langle \C \rangle\subset DB$. Then by (b)(i) $A$ is a split-by-nilpotent extension of $B$ by $\langle \C \rangle$ and thus $A =  B \ltimes \langle \C \rangle \subset B \ltimes DB = T(B)$. 
Now in order to show $\langle \C \rangle\subset DB$, 
consider a distinguished cycle $C \in \E$.
Up to cyclic permutation, we can write $C= \gamma q$ where $\gamma \in \C$ is an arrow from $x$ to $y$ and $q$ is a path from $y$ to $x$. 
By hypothesis, the path $q$ is in one-to-one correspondence with an element $p$ of $\soc_{B^e} B$ from $y$ to $x$. 
Then, by (b)(ii), $\gamma = p^* \in DB$ and $\langle \C \rangle \subset DB$.  

\medskip
To finish the proof it is enough to show that $DB \subset \langle \C \rangle$.
Let $p$ be an element in the basis of $\soc_{B^e} B$.
Then $p$ induces at least one elementary cycle $C$ of $T(B)$. 
Let $q_p$ be a path from $x$ to $y$ such that $\beta_pq_p = C$, thus $q_p$ is a path in $B$.
Since $q_p$ is a subpath of an elementary cycle in $T(B)$, $q_p$ is non-zero in $T(B)$ and thus non-zero in $B$.
Hence, $q_p$ is non-zero in $A$ and there exists a path $q'$ such that $q_p q' = C'$ is a distinguished cycle of $A$. 

We claim that $q'$ is in fact an arrow. 
From the fact that $q_p$ belongs to $B$, we have that $q'$ contains a unique arrow $\gamma \in \C$ which appears only once in $q'$, i.e., $q' = u \gamma v$ where $u$ and $v$ are paths in $B$.
If $q_p$ is a maximal non-zero path in
$B$ then $q_p$ is an element of the $\soc_{B^e} B$, and we get that $u$ and $v$ are idempotents, therefore $q' = \gamma$. 
It follows that $q_p$ is a supplement of $\gamma$ in $C'$ and then we can identify
$p^*$ with $\gamma$.

Otherwise, $q_p$ is in one-to-one correspondence with an element $p$ in the $\soc_{B^e} B$ of the form $p = a_p q_p + \sum_{i=1}^{t} a_iq_i$ in $\soc_{B^e} B$. 
Suppose that there are two supplements $\alpha$ and $r$ of $q_p$, where $\alpha$ is an arrow.
Then, by Definition~\ref{def:distinguished}, $\alpha-r$ is in $I_A$, which contradicts the assumption that $I_A$ is an admissible ideal.
Hence, if there is a supplement $\alpha$ of $q_p$ which is an arrow, then it is unique and it can be identified with $p^*$.

Now suppose that any supplement $r$ of $q_p$ in $A$ is of the form $r = u \alpha v$ where $\alpha \in \C$  and at least one of $u$ or $v$ is a path of positive length.
Moreover, at least one of the arrows of every $r$ is an arrow of $B$.
Since $A$ is a split-by-nilpotent extension of $B$, we have in particular that $B$ is a subalgebra of $A$.
Moreover $p$ is in $\soc_{B^e} B$, if the length of $u$ is greater than zero, $p (u\gamma v)=(pu)(\gamma v) = 0(\gamma v) = 0$  or, similarly, if the length of $v$ is greater than zero $r p = 0$. 
Hence $rp =0$ or $pr=0$ for every supplement $r$ of $q_p$.
By Definition~\ref{def:distinguished} we get that $p \in I_A$. 
This, together with the fact that $p$ is a linear combination of paths in $B$ implies that $p\in I_B$.
But this is a contradiction of our hypothesis that $p$ is an element of a basis of $\soc_{B^e} B$. 
This contradiction comes from the supposition that $r$ is not an arrow. 
Thus we can conclude that $DB$ is contained in $\langle \C \rangle$.

From the above we conclude that $T(B)$ and $A$ have the same quiver and that the elementary cycles of $T(B)$ coincide with the distinguished cycles of $A$ with the same weight function. 
Thus, it follows from Definition~\ref{def:distinguished} and Theorem~\ref{thm:ideal} that $A$ is isomorphic to $T(B)$ and $\langle\C \rangle$ is isomorphic to $D(B)$ as $B$-$B$-bimodules.
In particular $A$ is a symmetric algebra. 
\end{proof}

In  Theorem~\ref{thm:characterisation_trivial_extensions} we have characterised algebras that are isomorphic to a trivial extension  in terms of allowable cuts. 
We now use this result to study when two algebras have isomorphic trivial extensions in terms of admissible cuts.

\begin{corollary}\label{cor:isomorphic_trivial_extensions}
Let $A= KQ_A/I_A$ be a finite-dimensional algebra with trivial extension $T(A)$.
The following are equivalent
\begin{enumerate}[label=(\alph*)]
\item $T(A)\cong T(A')$
\item There exists a presentation of $T(A)$, and an admissible cut $\C$ of that presentation such that 
\begin{enumerate}
    \item[(i)] $A'\cong T(A)/\langle \C\rangle$,
    \item[(ii)] $T(A)$ is a split-by-nilpotent extension of $A'$,
    \item[(iii)] the supplements of each arrow of $\C$ in the elementary cycles of $T(A)$ are in one-to-one correspondence with the elements of $\soc_{A'^e} A'$.
\end{enumerate}
  
\end{enumerate}
\end{corollary}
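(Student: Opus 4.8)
The plan is to deduce the corollary directly from Theorem~\ref{thm:characterisation_trivial_extensions}, applied not to $A$ but to the algebra $T(A)$ itself: one takes the role of ``$A$'' in that theorem to be $T(A)$ and the role of ``$B$'' to be $A'$. The only additional inputs needed are Lemma~\ref{lem:Split_extension_and_isomorphism} and the observation recorded in the remark following Definition~\ref{allowablecut2}, namely that for a trivial extension every admissible cut is an allowable cut with respect to the set of elementary cycles. With this dictionary in place, the three conditions (i)--(iii) of statement (b) of the corollary are precisely the hypotheses of Theorem~\ref{thm:characterisation_trivial_extensions} read off for $T(A)$.

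For the implication (a) $\Rightarrow$ (b), I would fix an isomorphism $f\colon T(A')\to T(A)$ of $K$-algebras. Since $T(A')$ is a trivial extension, Theorem~\ref{thm:ideal} supplies, for any $K$-basis $\M'$ of $\soc_{A'^e}A'$, a presentation of $T(A')$ in which the new arrows $\{\beta_p : p\in\M'\}$ form an admissible cut $\C'$ and the induced elementary cycles are distinguished. Transporting this presentation along $f$ yields a presentation of $T(A)$ together with the cut $\C:=f(\C')$. Condition (i) then follows from Remark~\ref{rmk:trivial_cut}, since $A'\cong T(A')/\langle\C'\rangle\cong T(A)/\langle\C\rangle$; condition (ii) is exactly the content of Lemma~\ref{lem:Split_extension_and_isomorphism}; and condition (iii) is built into the construction of $Q_{T(A')}$, because the supplement of $\beta_p$ in its elementary cycles corresponds, via Definition~\ref{def:supplement}, to the socle element $p$, so the supplements of the cut arrows are in bijection with the chosen basis of $\soc_{A'^e}A'$.

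For the converse (b) $\Rightarrow$ (a), I would verify the hypotheses of Theorem~\ref{thm:characterisation_trivial_extensions} for the algebra $T(A)$ equipped with the given presentation and the cut $\C$, setting $B:=T(A)/\langle\C\rangle\cong A'$ by condition (i). Because $T(A)$ is a trivial extension of $A$, the ``only if'' direction of Theorem~\ref{thm:characterisation_trivial_extensions} guarantees that the set $\E$ of elementary cycles of this presentation, with the weight function of Definition~\ref{def:elementary}, is a set of distinguished cycles, which gives hypothesis (a) of that theorem; and by the remark following Definition~\ref{allowablecut2} the admissible cut $\C$ is an allowable cut with respect to $\E$. Hypotheses (b)(i) and (b)(ii) of the theorem are then, respectively, conditions (ii) and (iii) of the corollary. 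Theorem~\ref{thm:characterisation_trivial_extensions} now yields $T(A)\cong T(B)=T(A')$, as desired.

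The step I expect to require the most care is the transport of structure in (a) $\Rightarrow$ (b): one must check that pushing the canonical presentation of $T(A')$ forward along $f$ produces a genuine presentation of $T(A)$ for which $f(\C')$ is an admissible cut in the sense of Definition~\ref{def:admissible cut}, i.e. that the elementary-cycle combinatorics and the socle correspondence of Definition~\ref{def:supplement} are preserved. This is precisely where one must use that $f$ is an isomorphism of $K$-algebras, so that it respects the enveloping-algebra socle $\soc_{(-)^e}(-)$ and hence matches elementary cycles with elementary cycles, rather than treating $f$ as a mere $K$-linear bijection.
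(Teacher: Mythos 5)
Your proposal is correct and follows essentially the same route as the paper: the paper's own proof transports the canonical presentation of $T(A')$ (with its admissible cut of new arrows, via Theorem~\ref{thm:ideal}) onto $T(A)$ for (a)~$\Rightarrow$~(b), and applies Theorem~\ref{thm:characterisation_trivial_extensions} to $T(A)$ for the converse. Your version merely spells out the details (Remark~\ref{rmk:trivial_cut}, Lemma~\ref{lem:Split_extension_and_isomorphism}, and the dictionary between the corollary's conditions and the theorem's hypotheses) that the paper leaves implicit.
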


\begin{proof}

Suppose that $T(A)\cong T(A')$. Then the quiver of $T(A)$ is equal to the quiver of $T(A')$. 
Hence $T(A')$ is a different presentation of $T(A)$.
Then the result follows from Theorem~\ref{thm:ideal}.
For the converse it suffices to apply Theorem~\ref{thm:characterisation_trivial_extensions} to the algebra $T(A)$.
\end{proof}

In the following example, we illustrate an algorithm to decide whether an algebra is a trivial extension.

\begin{example}
Consider the algebra $A$ given by the quiver $Q_A$
\[
\begin{tikzcd}
  &2 \ar[dl, "\alpha_4", shift left=0.8ex]\\
1 \arrow[out=210,in=140, loop, "\alpha_1"] \ar[ru, "\alpha_2", shift left=0.8ex] \ar[rd, "\alpha_3", shift left=0.8ex]& \\
 & 3 \ar[lu, "\alpha_5", shift left=0.8ex]
\end{tikzcd}\]
with ideal of relations 
$$I_A= \langle \alpha_1^2, \alpha_4\alpha_2, \alpha_5\alpha_3, \alpha_5\alpha_2, \alpha_4\alpha_3, \alpha_4\alpha_1\alpha_3, \alpha_5\alpha_1\alpha_2, \alpha_2\alpha_4-\alpha_3\alpha_5 \rangle.$$
We claim that the cycles $C_1=\alpha_1\alpha_2\alpha_4$ and $C_2=\alpha_1\alpha_3\alpha_5$ and all its permutations are the distinguished cycles of $Q_A$ with weights $w(C_1)=w(C_2)=1$. 
In order to show that the claim holds we need to verify that $I_A$ is determined by $C_1$, $C_2$ and its cyclic permutations, as indicated in  Definition~\ref{def:distinguished}.
Indeed, all the monomial relations correspond to paths that are not subpaths of a distinguished cycle and the relation $\alpha_2\alpha_4-\alpha_3\alpha_5$ is obtained from Definition~\ref{def:distinguished}.(iii).
Moreover the set $\C=\{\alpha_4, \alpha_5\}$ form an allowable cut of $Q_A$ with respect to this set of cycles. 
Finally, the algebra $B = A / \langle \alpha_4, \alpha_5\rangle$ is isomorphic to the path algebra of the quiver $Q_B$
\[
\begin{tikzcd}
  &2 \\
1 \arrow[out=210,in=140, loop, "\alpha_1"] \ar[ru, "\alpha_2", shift left] \ar[rd, "\alpha_3", shift right]& \\
 & 3
\end{tikzcd}\]
modulo $I_B=\langle\alpha^2_1\rangle$.
A basis of the two-sided socle soc$B_{B^e}$ of $B$ is given by the paths $\alpha_1\alpha_2$ and $\alpha_1\alpha_3$.
Hence there is a correspondence between a basis of soc$B_{B^e}$ and the set $\C$.
Finally, it follows from \cite[Theorem~2.5]{ACT} that $A$ is a split-by-nilpotent extension of $B$. 
Then, we can apply Theorem~\ref{thm:characterisation_trivial_extensions} to conclude that $A$ is the trivial extension of $B$.
\end{example}

\section{Wakamatsu's Theorem for bound path algebras}

One of the motivating questions for this paper was to determine the explicit relationship between two finite dimensional algebras that have isomorphic trivial extensions. This has been abstractly described by Wakamatsu in \cite{Wakamatsu1984}, where he also gives necessary and sufficient conditions to decide when two trivial extensions of Artin algebras are isomorphic. 
In this section, we give an explicit description of Wakamatu's result in terms of quivers and relations by providing  an independent proof.

\begin{theorem}\label{thm:Wakamatsu_for_path_algebras}
Let $A= KQ_A/I_A$ be a finite-dimensional algebra with trivial extension $T(A)$ and set $(Q_A)_1=\{\alpha_1, \ldots, \alpha_n\}$ and $\{\beta_1, \dots, \beta_t\}= (Q_{T(A)})_1 \setminus (Q_A)_1$.
The following are equivalent
\begin{enumerate}[label=(\alph*)]
\item $T(A)\cong T(A')$.
\item There exists an admisible cut $\C$ of the form
$$\C\;=\;\{ \alpha_1, \ldots, \alpha_r, \beta_1, \ldots, \beta_s: \alpha_i \in (Q_A)_1 \text{ and } \beta_j\not \in (Q_A)_1\}$$ with $A'\cong T(A)/ \langle \C \rangle$, $T(A)$ is a split-by-nilpotent extension of $A'$ and the supplements in the elementary cycles of the cut arrows are in  one-to-one correspondence with the elements of  $\soc_{A'^e}
A'$.
\item $A \cong S \ltimes M$ and $A' = S \ltimes D(M)$, where 
\begin{enumerate}
    \item[(i)] $S$ is the subalgebra of $T(A)$ generated by $\sum_{x \in (Q_{T_A})_0} e_x$ the identity of $T(A)$ and $\alpha_{r+1}, \ldots, \alpha_n$,
    \item[(ii)] $M = S \left\langle \alpha_1, \ldots, \alpha_r \right\rangle S$,
    \item[(iii)] and $D(M) = S \left\langle \beta_{s+1}, \ldots, \beta_t\right\rangle S$.
\end{enumerate}
\end{enumerate}
\end{theorem}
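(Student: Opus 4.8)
The plan is to treat the two nontrivial implications separately, using that Corollary~\ref{cor:isomorphic_trivial_extensions} already records the equivalence between (a) and the existence of an admissible cut of $T(A)$ satisfying conditions (i)--(iii). Since the arrow set $(Q_{T(A)})_1$ is partitioned into the original arrows $\alpha_1,\dots,\alpha_n$ and the adjoined arrows $\beta_1,\dots,\beta_t$, every admissible cut can be written, after relabelling, in the form $\C=\{\alpha_1,\dots,\alpha_r,\beta_1,\dots,\beta_s\}$. Hence the equivalence of (a) and (b) is just Corollary~\ref{cor:isomorphic_trivial_extensions} with the cut displayed explicitly, and the real content is the equivalence of (b) and (c).

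For (b)$\Rightarrow$(c), I would first record the two quotient descriptions $A\cong T(A)/\langle\beta_1,\dots,\beta_t\rangle$ (Remark~\ref{rmk:trivial_cut}) and $A'\cong T(A)/\langle\C\rangle$, and set $S$ to be the subalgebra of $T(A)$ generated by the idempotents and $\alpha_{r+1},\dots,\alpha_n$. The key structural input is admissibility: by Theorem~\ref{thm:ideal} every nonzero path of $T(A)$ is contained in an elementary cycle, and such a cycle meets $\C$ in exactly one arrow. Consequently a nonzero path of $A$ contains at most one of the cut arrows $\alpha_1,\dots,\alpha_r$, which yields the vector-space decomposition $A=S\oplus M$ with $M^2=0$, that is $A\cong S\ltimes M$; the same count, applied to the unique adjoined arrow of each elementary cycle, gives $A'=S\oplus D(M)$ with $D(M)^2=0$, that is $A'\cong S\ltimes D(M)$. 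This also shows that $S$ embeds as a common subalgebra of both $A$ and $A'$.

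The main obstacle is the final identification $D(M)\cong DM$ as $S$-$S$-bimodules, which upgrades the two decompositions into the symmetric statement of Wakamatsu's theorem. Here I would exploit that $T(A)$ is a symmetric algebra: its nondegenerate symmetrising form restricts to a pairing $M\times D(M)\to K$ which, via the explicit relations of Theorem~\ref{thm:ideal}, is computed on paths by completing a cut arrow and its supplement to an elementary cycle and reading off the weight $\omega(C)$. The correspondence in (b)(iii) between supplements of the cut arrows and a basis of $\soc_{A'^e}A'$ guarantees this pairing is nondegenerate and $S$-balanced, so it exhibits $S\langle\beta_{s+1},\dots,\beta_t\rangle S$ as the $K$-dual of $S\langle\alpha_1,\dots,\alpha_r\rangle S$. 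Equivalently, one can argue that $DA\cong DS\oplus DM$ as $S$-bimodules, that the adjoined arrows correspond to $\operatorname{top}(DA)=\operatorname{top}(DS)\oplus\operatorname{top}(DM)$, and that $\C$ selects exactly the $\operatorname{top}(DS)$ summand among the $\beta$'s, leaving $\beta_{s+1},\dots,\beta_t$ to generate the $DM$-summand.

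Finally, for (c)$\Rightarrow$(b) I would reverse the construction: starting from the data in (c), form $\C=\{\alpha_1,\dots,\alpha_r,\beta_1,\dots,\beta_s\}$, check that it is an admissible cut (each elementary cycle contains a unique such arrow because $S$ carries no cut arrow and the pairing $M\times D(M)$ matches cut $\alpha$'s with non-cut $\beta$'s), and verify conditions (i)--(iii), namely $A'\cong T(A)/\langle\C\rangle$, the split-by-nilpotent property, and the socle correspondence; the conclusion $T(A)\cong T(A')$ then follows from Corollary~\ref{cor:isomorphic_trivial_extensions}. Alternatively one gives a direct isomorphism $T(S\ltimes M)\to T(S\ltimes DM)$ that fixes $S$ and $DS$ and interchanges the $M$ and $DM$ summands of the bimodule $DS\oplus M\oplus DM$, the symmetry of the defining multiplication under $M\leftrightarrow DM$ being exactly what makes this a morphism of algebras.
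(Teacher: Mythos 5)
Your proposal follows essentially the same route as the paper: the equivalence of (a) and (b) via Corollary~\ref{cor:isomorphic_trivial_extensions}, the decompositions $A=S\oplus M$ and $A'=S\oplus D(M)$ obtained by counting how the cut arrows meet elementary cycles, and a duality identifying the bimodule generated by $\beta_{s+1},\dots,\beta_t$ with $D(M)$, followed by the symmetric swap of $M$ and $D(M)$ for (c)$\Rightarrow$(a). The only cosmetic difference is in the duality step: you conclude $N\cong D(M)$ from nondegeneracy of the restricted symmetrising form, whereas the paper defines the map $N\to D(M)$ directly (the $\beta_p$ are literally the functionals $p^*$), proves injectivity by the same elementary-cycle computation, and obtains surjectivity from the dimension count $\dim_K A=\dim_K A'$ coming from $T(A)\cong T(A')$.
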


\begin{proof}
 By Corollary~\ref{cor:isomorphic_trivial_extensions} $(a)$ and $(b)$ are equivalent. 

\medskip
We now show that $(b)$ implies $(c)$. Suppose that $A$ and $A'$ are as in $(b)$ and let $S$ be the subalgebra of $T(A)$, and  $M$ and $D(M)$ be the $S$-$S$-bimodules as defined in $(c)$.

The first step is to show that $A \cong S \ltimes M$.
Let $N \subset T(A)$ be the $S$-$S$-bimodule generated by $\{\beta_{s+1}, \ldots, \beta_t\}$, hence $N^2=0$. 
By construction $M \subset \langle \C \rangle$ and therefore we also have that $M^2=0$. 
We claim that $S \cap (M + N) = 0 $ in $T(A)$.
Suppose that we have $0 \neq s \in  S \cap (M + N)$. 
Then $s = \sum a_i u_i= \sum b_j v_j$ where each $u_i$ is a path from $x$ to $y$ in $S$ and each $v_j$ is a path from $x$ to
$y$ containing at least an arrow in $\{\alpha_1, \ldots, \alpha_r\}$ or in $\{\beta_{s+1}, \ldots, \beta_t\}$. 
If $s$ is a non-zero element of $T(A)$, at least one of the $u_i$ is non-zero, lets call it $u_k$. 
By Theorem~\ref{thm:ideal} there exists a path $q_k \in T(A)$ from $y$ to $x$ such that $q_k u_k$ is an elementary cycle in $T(A)$, that is, $q_k$ is a supplement of
$u_k$.
Recall that each elementary cycle of $T(A)$ contains exactly one arrow of each admissible cut.
Hence the elementary cycle $q_k u_k$ contains exactly one arrow in $\C$ and
one arrow in $\{\beta_1, \ldots, \beta_t\}$.
Since $u_k$ is in $S$, it contains no arrow in $\C \cup \{\beta_1, \ldots, \beta_t\}$. Thus $q_k$  contains exactly one arrow in $\C$ and one arrow in $\{\beta_1, \ldots,
\beta_t\}$.
Furthermore $q_{k} s$ is non-zero and thus there exists $j$ such that $q_k v_j$ is non-zero. 
By definition $v_j$ has at least one arrow of $\C$ or $\{\beta_1, \ldots, \beta_t\}$. 
This implies that $q_{k} v_j$ has two arrows in the same admissible cut. 
This implies that $q_k v_j$ is not a subpath of an elementary cycle and thus $q_k v_j = 0$ by Theorem~\ref{thm:ideal} and we arrive at a contradiction. 
Thus $S \cap (M+N) = 0$.
In particular, we have that $S \cap M = 0 $ and $S \cap N = 0$.

Recall that $S$ is a subalgebra of $T(A)$ generated by  $e_x $ and $\alpha_{r+1}, \ldots, \alpha_n$. By Theorem~\ref{thm:ideal} the relations of $T(A)$ only depend on the relations of
$A$ and the elementary cycles in $T(A)$. 
Thus $S$ can be seen as a  subalgebra of $A$ generated by $e_x$ and $\alpha_{r+1}, \ldots, \alpha_n$.
Now let $a \in A$, then $a = u +v $ where $u \in S$ and $v \in M$. Then for $m \in M$, $am = u m + v m = u m$ since $v m \in M^2 =0$. By
symmetry $M$ is a two-sided ideal of $A$.
Since $S \cap M =0$, the map $h : S \ltimes M \to A$ defined by $h(s,m)= s+m$ is an isomorphism and $S \ltimes M \cong A$.

\medskip
Now, we prove that $A' = S \ltimes DM $.
Since $N$ is a $S$-$S$-bimodule such that $N^2=0$ we get that $\overline{A'} := S + N$ is a subalgebra of $T(A)$.
We have that $\overline{A'}$ is isomorphic to $S \ltimes N$ using a similar argument that we used to show that $A \cong S \ltimes M$.

We now show that $\overline{A'}\cong A'$.
Consider the canonical map $\pi: T(A) \to A'$ restricted to $\overline{A'}$.
Since $\pi|_{\overline{A'}}$ surjects the generators of $\overline{A'}$ onto the generators of $A'$ and the relations of $A'$ are induced by the relations in $T(A)$, we have that $\pi|_{\overline{A'}}$ is a
surjective algebra morphism.

To show that $\pi|_{\overline{A'}}$ is an isomorphism, we need to show that the kernel $\ker(\pi|_{\overline{A'}})=\langle \C \rangle \cap \overline{A'} = \langle \C \rangle \cap (S + N) = 0$.
Let $0 \neq \rho \in \langle \C \rangle \cap (S + N)$.
Then $\rho$ is in $\langle \C \rangle$ and there are $s \in S$ and $n \in N$ such that  $\rho = s + n$, where $ s=\sum a_ku_k $ and $n=\sum b_k v_k$.
Assume $\rho$ is a linear combination of paths from $x$ to $y$.
Note that $\langle \C \rangle \cap N = 0$ since $\langle \C \rangle$ is generated by $\{\beta_1, \dots, \beta_s\}$, $N'$ is the ideal generated by $\{\beta_{s+1}, \dots, \beta_t\}$ and $\{\beta_{1}, \dots,
\beta_t\}$ is an admissible cut of $T(A)$.
If $s$ is non-zero, there exists a $k$ such that $u_k$ is a non-zero path in $T(A)$.
Then there exists a path $q_k$ from $y$ to $x$ such that $q_k u_k$ is an elementary cycle.
Thus $q_k u_k$ contains exactly one arrow of the admissible cut $\C$ and one arrow of $\{\beta_{1}, \dots, \beta_t\}$.
Since $u_k$ is a path in $S$ we have that $q_k$ must contain one arrow of the admissible cut $\C$ and one arrow of $\{\beta_{1}, \dots, \beta_t\}$.
This implies that $q_k v_k =0$ for all $k$ because $N$ is generated by $\{\beta_{s+1}, \dots, \beta_t\}$. 
Thus that $q_k \rho = q_k s \neq 0$.
On the other hand, $\rho \in \langle \C \rangle \cap (S + N)$.
Then we can write $\rho = \sum c_jw_j  \in \langle \C \rangle = \langle \alpha_1, \dots, \alpha_r, \beta_1, \dots, \beta_t \rangle$.
So $w_j$ contains an arrow of $\C$, for all $j$. 
Since we have seen that $q_k$ also contains an arrow of $\C$ we have that $q_k w_j =0$, for all $j$.
Hence $q_k \rho = 0$.
A contradiction that arise from the assumption that $\rho \neq 0$.
Then $(S+N)\cap \langle \C \rangle = 0$, as claimed.

To finish, we need to show that $D(M) \cong N$.
We recall that $N=\langle \beta_{s+1}, \dots, \beta_t \rangle$ where $\beta_k = (0, p_k^*)$ and $p_k^*: A \to K$ is an element of the basis of $D(soc_{A^e} (A))$ which we extend to a basis of $D(A)$.
Thus $N \subset D(A)$.
We define $f : N \to D(M)$ where $f(w): M \to K$ is given by $f(w)(m) = w(m)$ for all $w \in N$ and $m \in M$.

We show that $f: N \to D(M)$ is injective.
Let $0 \neq w \in N$ such that $f(w)(m)=0$ for all $m \in M$.
Without loss of generality, $w = \sum b_k v_k$, where $v_k$ is a path from the vertex $x$ to $y$.
If $w$ is not zero, there exists a $k$ such that $v_k$ is not zero in $T(A)$.
Then there exists a path $q_k$ from $y$ to $x$ such that $q_kv_k$ is an elementary cycle.
By construction, $v_k$ contains exactly one arrow $\{\beta_{s+1}, \dots, \beta_t\}$.
Hence $q_k$ does not contain any arrow in $\{\beta_1, \dots, \beta_t\}$, and $q_k$ is an element of $A$.
On the other hand, the elementary cycle $q_kv_k$ contains exactly one arrow $\alpha$ in $\C$ and $v_k$ does not, thus $q_k$ contains exactly one arrow $\alpha$ of
$\C$.
This implies that $q_k \in M$.
Then $f(w)(q_k)=w(q_k) \neq 0$, which is a contradiction.
So $f$ is injective.

Since $(b)$ implies $(a)$ by Corollary~\ref{cor:isomorphic_trivial_extensions}, $T(A) \cong T(A')$.
Then $\dim_kT(A)=\dim_kT(A')$ and this implies that $\dim_kT(A) = 2 \dim_k A$. 
In particular $\dim_kA = \dim_kA'$. 
From $A = S \ltimes M$ and $A'
= S \ltimes N'$ it follows that $\dim_k DM = \dim_k M = \dim_k N$. 
Then $f$ is surjective and $N$ is isomorphic to $DM$. 
We conclude that $A' \cong S \ltimes DM$.

Now we prove that $(c)$ implies $(a)$.
Since $A = S \ltimes M$ and $A' = S \ltimes DM$ we have that 
$T(A)= A \ltimes DA = S \ltimes M \ltimes (DS \ltimes DM)$ and
$T(A)= A' \ltimes DA' = S \ltimes DM \ltimes (DS \ltimes M)$.
It is not difficult to show that $T(A) \cong T(A').$
\end{proof}

\appendix
\section{The ideal of relations of the trivial extension of an algebra  by Elsa Fernandez}\label{appendix}

In this appendix we give an explicit description of the ideal of relations $I_{T(A)}$ of the trivial extension $T(A) = A \ltimes D(A)$ for any finite-dimensional algebra $A=KQ/I$.

For completeness, we first restate the explicit description of the ordinary quiver of
$T(A)$ based on the quiver of $A$, as given in Section~\ref{sec:background}.  
This description depends on a $K$-basis $\M=\{p_1, \dots, p_n\}$ of $\soc_{A^e}A$. 
The set of vertices $(Q_A)_0$ and $(Q_{T(A)})_0$ coincide.
The set of arrows $(Q_{T(A)})_1$ is the disjoint union of $(Q_A)_1$ and $\{\beta_{p_1}, \dots, \beta_{p_n}\}$ where $s(\beta_{p_i})=t(p_i)$ and $t(\beta_{p_i})=s(p_i)$ for every $p_i \in \M$.

Before giving an explicit description of the ideal of  relations $I_{T(A)}$ of $T(A)$, we recall the following definition from \cite{Fernandez2002}.
For each vertex $x\in Q_{T(A)}$ we define the two-sided ideal $I_x'$  in $KQ_{T(A)}$ generated by

\begin{enumerate}[label=(\roman*)]
\item oriented cycles from $x$ to $x$ which are not contained in an elementary cycle;
\item the elements of the form $\omega (C')C-\omega (C)C'$, where $C$ and $C'$ are elementary cycles starting and ending at $x$.
\end{enumerate}

\begin{theorem}\label{thm:ideal}
Let $A= KQ_A / I_A$ be a finite-dimensional algebra and let $T(A) = KQ_{T(A)}/I_{T(A)}$ be its trivial extension.
Then the quiver $Q_{T(A)}$ is as above and the ideal $I_{T(A)}$ is generated by the union of the following sets.

\begin{enumerate}
    \item A generating set of the ideal of relations $I_A$ of $A$.
    \item The paths that are not contained in an elementary cycle.
    \item For any vertices $x$ and $y$ in $Q_{T(A)}$, the linear combinations of paths $\rho\in e_xKQ_{T(A)}e_y$ such that $q \rho\in I_x'$ or $\rho q\in I_{y}'$ for any supplement path $q$ in an elementary cycle $C$.
\end{enumerate}
\end{theorem}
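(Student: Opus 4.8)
The plan is to work with the canonical surjection of $K$-algebras
$$\phi\colon KQ_{T(A)} \longrightarrow T(A) = A \oplus D(A),$$
supplied by the quiver description of \cite{Fernandez2002}: it sends $e_x \mapsto (e_x,0)$, each old arrow $\alpha \in (Q_A)_1$ to $(\alpha,0)$, and each new arrow $\beta_{p_i}$ to $(0,p_i^*)$, where $p_i^*$ is the functional dual to the socle basis element $p_i$. This $\phi$ realises $T(A)$ as $KQ_{T(A)}/\ker\phi$, so that $I_{T(A)}=\ker\phi$, and the whole statement reduces to proving $\ker\phi = \langle R_1\cup R_2\cup R_3\rangle$, where $R_1,R_2,R_3$ denote the three generating sets in the statement. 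I would establish the two inclusions separately.

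The organising observation is that $\phi$ behaves according to the number of new arrows occurring in a path: a path with no new arrow lands in the $A$-summand as $(\text{path},0)$, a path with exactly one new arrow lands in $D(A)$ (possibly as zero), and---crucially---since multiplication on $D(A)$ vanishes, any path containing two or more new arrows is killed by $\phi$. For the inclusion $\langle R_1\cup R_2\cup R_3\rangle \subseteq \ker\phi$, the set $R_1$ is immediate; for $R_2$ one shows that a path not contained in any elementary cycle maps to zero, splitting into the three cases just listed (the $\geq 2$ new arrow case is the vanishing noted above, while a path with zero or one new arrow lying in no elementary cycle is shown to pair trivially under the socle duality, hence to vanish in $A$ resp.\ $D(A)$); and for $R_3$ one uses that the image of a one-new-arrow combination $\rho$ is a functional whose values are recovered by composing $\rho$ with supplement paths $q$ and reading off coefficients against elementary cycles, so the hypothesis $q\rho\in I_x'$ (resp.\ $\rho q\in I_y'$) forces $\phi(\rho)=0$.

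For the reverse inclusion I would argue by dimension. Set $\Lambda := KQ_{T(A)}/\langle R_1\cup R_2\cup R_3\rangle$. By the first inclusion $\phi$ induces a surjection $\Lambda \twoheadrightarrow T(A)$, so it suffices to produce a spanning set of $\Lambda$ of cardinality at most $\dim_K T(A) = 2\dim_K A$, which then forces this surjection to be an isomorphism and hence $\ker\phi = \langle R_1\cup R_2\cup R_3\rangle$. Concretely, $R_2$ collapses $\Lambda$ onto classes of paths lying in elementary cycles; among these, the paths with no new arrow are governed---modulo $R_1$ and the weight-type relations built into $R_3$---by a path basis of $A$, and the paths with exactly one new arrow by the dual basis of $D(A)$. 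The relations $\omega(C')C-\omega(C)C'$ inside the ideals $I_x'$ are precisely what identifies two one-new-arrow paths having the same image functional, so that the resulting spanning set has the required size.

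The step I expect to be the main obstacle is the $D(A)$-part of both inclusions when $\soc_{A^e}A$ is spanned by genuine $K$-linear combinations of paths rather than by single paths, as in the worked examples where socle elements such as $\alpha_1\alpha_2-\beta_1\beta_2$ occur. In that situation, deciding which one-new-arrow combinations have vanishing image, and which pairs share an image, is not monomial and must be tracked through the supplement-and-weight bookkeeping encoded in $I_x'$ and $R_3$. Making this precise---showing that $R_3$ captures exactly the linear dependencies among images of one-new-arrow paths that are forced by relations among the completing paths---is the technical heart of the argument, and is where careful use of the pairing $p_i^*(\,\cdot\,)$ together with the weight function $\omega$ is indispensable.
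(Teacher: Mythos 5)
Your proposal follows the same route as the paper's proof: the same surjection $\Phi\colon KQ_{T(A)}\to T(A)$ with $e_x\mapsto(e_x,0)$, $\alpha\mapsto(\alpha,0)$, $\beta_p\mapsto(0,p^*)$, the same case analysis by the number of new arrows for the inclusion $\langle R_1\cup R_2\cup R_3\rangle\subseteq\ker\Phi$, and the same dimension-count strategy for the reverse inclusion. The first inclusion is handled correctly and matches the paper (which packages the relevant facts about $\phi_2$ into Lemma~\ref{lem:lemma1} and Proposition~\ref{prop:ideal_of_T(A)}).

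The one place where your plan stops short of an argument is exactly the step you flag as the technical heart: you assert that, modulo $R_1$--$R_3$, the classes of paths containing exactly one new arrow span a subspace of dimension at most $\dim_K D(A)=\dim_K A$, on the grounds that the relations $\omega(C')C-\omega(C)C'$ "identify two one-new-arrow paths having the same image functional". That is the conclusion one wants, not a proof of it; a priori the quotient $KQ_{T(A)}/I'$ could contain one-new-arrow classes that are linearly independent even though their images under $\phi_2$ are dependent. The paper closes this gap by a local duality between the two summands: writing $\P_{xy}=\iota(e_xAe_y)$ for the "old" part and $\F_{yx}=\pi(e_xI_{\M}e_y)$ for the "one-new-arrow" part in the quotient, it shows $\P_{xy}\neq 0$ iff $\F_{yx}\neq 0$ and, crucially, $\dim_K\P_{xy}\geq\dim_K\F_{yx}$. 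The inequality is proved by contradiction: given $n+1$ independent classes $\mu_1,\dots,\mu_{n+1}$ in $\F_{yx}$ with $n=\dim_K\P_{xy}$, their supplements $\gamma_t$ in $\P_{xy}$ must be dependent, and combining a nonzero product $\mu_s\gamma_r$ with the one-dimensionality of elementary cycles modulo $I_x'$ (Remark~\ref{rem:remark1}) and the nilpotence of cycles in the finite-dimensional algebra $A$ (some $\rho^m=0$) yields $0\neq\rho^{m-1}\mu_r=b^{-1}\rho^m\mu_s=0$. Summing $\dim_K\P_{xy}+\dim_K\F_{yx}$ over all pairs of vertices then gives $\dim_K(KQ_{T(A)}/I')\leq 2\dim_KA$. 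Some argument of this kind, pairing the one-new-arrow classes against supplements in $A$ rather than reading off their images directly, is what your sketch still needs to supply.
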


In order to prove our result we need to recall and show some preliminary results.
We start with the following remark.

\begin{remark}\label{rmk:remark0}
\begin{enumerate}[label=(\roman*)]
    \item Note that it follows directly from Definition~\ref{def:elementary} that each non-zero path $q$ in $KQ_{A}$ can be completed to an elementary cycle in $KQ_{T(A)}$. In
other words, each non-zero path in $KQ_A$ is contained in an elementary cycle $C$.
\item  In Item $3.$ of Theorem~\ref{thm:ideal}, if we set $x=y$, then $\rho$ is a relation generated by the second condition in the definition of $I'_x$ above.
Moreover, every relation of the form $\omega (C')C-\omega (C)C'$ arises in this way by taking $q= e_x$ the trivial path at $x$.

\item Also note that it follows from Item $2.$ of Theorem~\ref{thm:ideal} that if $C= \alpha_1 \dots \alpha_n$ is an elementary cycle in $T(A)$, then the paths $C\alpha_1$ and $\alpha_nC$ are zero in $T(A)$.
\end{enumerate}
\end{remark}

Consider the morphism of $K$-algebras $\Phi: KQ_{T(A)} \to T(A)$ defined on the stationary paths and arrows as follows.
\begin{center}
\begin{tabular}{lll}
  $\Phi(e_x)=(e_x,0)$, &  &  $\text{ for all $x \in (Q_A)_0$}$\\
  $\Phi(\alpha)=(\alpha, 0)$, &  & $\text{ for all $\alpha\in (Q_A)_1$}$,\\
  $\Phi(\beta_p)=(0,p^*)$, & & \text{ and $p\in\mathbb{M}$}.
\end{tabular}
\end{center}

It follows immediately from the definition of $\Phi$ that it is a surjective morphism of algebras and $\ker \Phi \cong I_{T(A)}$. 
Associated with $\Phi$ we consider two morphisms
$$\phi_1 = \pi_1 \Phi: KQ_{T(A)} \to A \qquad \text{and} \qquad \phi_2 = \pi_2 \Phi: KQ_{T(A)} \to D(A)$$
where $\pi_1$ and $\pi_2$ are the natural projections induced by the decomposition of $T(A)=A\oplus D(A)$ as $K$-vector spaces.
We denote by $I_\mathbb{M}=\langle \beta_p \rangle_{p\in \mathbb{M}}$ the ideal of $T(A)$ generated by the elements $\beta_p$ in $T(A)$.
The following lemma was shown in \cite[Lemma 3.5]{Fernandez2002} provided that each cycle in $A$ is zero, but the same proof holds for any algebra $A=KQ/I$.

\begin{lemma}\label{lem:lemma1}
Let $A=KQ_A/I_A$ be an algebra and let $q, u$ be paths in $KQ_{T(A)}$.
\begin{enumerate}[label=(\alph*)]
    \item If $v\in KQ_{T(A)}$ is such that $v=v_1+v_2$ with $v_1\in KQ_A$ and $v_2 \in I_\M$, then $\Phi(v)=(\phi_1(v_1), \phi_2(v_2))$.
    \item $\phi_2(q)\neq 0$ implies $q\in I_\M$.
    \item $\phi_2(q) = 0$ if $q$ contains at least two arrows $\beta_p$, $p \in \M$.
    \item $\phi_2(q)(u)\neq 0$ implies that $u$ is a supplement of $q$.
    \item $\phi_2(v)(u)=\phi_2(vu)(e_y)=\phi(uv)(e_x)$ if $u$ is a path from $x$ to $y$ in $Q_A$.
    \item If $v=\sum a_iq_i$, with $q_i$ different paths and $\phi_2(v)\neq 0$, then there exists a supplement $u$ of the $q_i$ such that $\phi_2(uv)\neq 0$ and
        $\phi_2(vu) \neq 0$.
    \item Let $C$ be an elementary cycle with origin $e$.
    Then $\phi_2(C)(e)=\omega (C)$ and $\phi_2(C)(u)=0$ for any path $u \neq e$.
    \item If $q$ has a supplement, then $\Phi(q) \neq 0$.
\end{enumerate}
\end{lemma}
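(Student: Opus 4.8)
The plan is to derive all eight statements from three facts: that $\Phi$ is a morphism of algebras and hence multiplicative, that the multiplication in $T(A)=A\oplus D(A)$ is $(a,f)(b,g)=(ab,\,ag+fb)$ with $D(A)\cdot D(A)=0$, and that each $p^*$ is by construction the functional dual to the socle element $p\in\soc_{A^e}A$ with respect to the chosen basis. Two elementary consequences of the multiplication rule will be used repeatedly: once a factor of the form $(0,f)$ occurs in a product its first component is zero thereafter (since $(a,g)(0,f)=(0,af)$), and the product of two factors $(0,f)$, $(0,g)$ vanishes because $\mu=0$. I also record the bimodule conventions $(a\cdot f)(x)=f(xa)$ and $(f\cdot a)(x)=f(ax)$ forced by this multiplication.

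Statements (a)--(c) are then immediate. For (a) I split $\Phi(v)=\Phi(v_1)+\Phi(v_2)$: since $v_1\in KQ_A$ contains no arrow $\beta_p$ we get $\Phi(v_1)=(\phi_1(v_1),0)$, while every path occurring in $v_2\in I_\M$ contains some $\beta_p$, so by the first observation $\Phi(v_2)=(0,\phi_2(v_2))$. For (b) I argue by contraposition: a path $q$ with no $\beta_p$ lies in $KQ_A$, hence $\Phi(q)\in A\times\{0\}$ and $\phi_2(q)=0$. For (c), reading $q$ from its first $\beta_p$ onward the accumulated product is always of the form $(0,f)$, so a second $\beta_{p'}$ multiplies two such factors and gives $\Phi(q)=0$.

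For (d) I combine (b) and (c): if $\phi_2(q)\neq0$ then $q$ contains exactly one $\beta_p$, say $q=q_1\beta_pq_2$ with $q_1,q_2\in KQ_A$, and a direct computation gives $\phi_2(q)=\phi_1(q_1)\cdot p^*\cdot\phi_1(q_2)$, whence $\phi_2(q)(u)=p^*(\phi_1(q_2)\,u\,\phi_1(q_1))$. This is nonzero only when $q_2\,u\,q_1$ has a nonzero $p$-component, which forces $u$ to close $q$ into a cycle, i.e. to be a supplement of $q$. Identity (e) is the computation $\phi_2(vu)=\phi_2(v)\cdot\phi_1(u)$ evaluated at $e_y$, together with the symmetric $\phi_2(uv)=\phi_1(u)\cdot\phi_2(v)$ evaluated at $e_x$; here one uses $u\,e_y=u=e_x\,u$ because $u$ runs from $x$ to $y$. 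Finally (f) follows from (e): since $\phi_2(v)\neq0$ there is a basis path $u$ with $\phi_2(v)(u)\neq0$, and as all the $q_i$ share the same source and target this $u$ is simultaneously a supplement of each $q_i$; then (e) yields $\phi_2(vu)(e_y)=\phi_2(v)(u)=\phi_2(uv)(e_x)\neq0$.

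For (g) I write the elementary cycle as $C=q\beta_p$ with $q=\alpha_1\cdots\alpha_t$, so that $\Phi(C)=(0,\phi_1(q)\cdot p^*)$ and therefore $\phi_2(C)(e)=p^*(\phi_1(q))=\omega(C)$ by Definition~\ref{def:elementary}. For $u\neq e$ the bimodule structure forces $u$ to be a cycle at the origin for $\phi_2(C)(u)=p^*(u\,\phi_1(q))$ to have any chance of being nonzero, and then $p^*(u\,\phi_1(q))=0$. Statement (h) is then a consequence of (g): a supplement of $q$ exhibits $q$ as a subpath of an elementary cycle $C$, and writing $C=a\,q\,b$ cyclically gives $\Phi(C)=\Phi(a)\Phi(q)\Phi(b)$ with $\Phi(C)\neq0$ (its value at the origin is $\omega(C)\neq0$), forcing $\Phi(q)\neq0$. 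The \emph{main obstacle} is precisely the vanishing $p^*(u\,\phi_1(q))=0$ in (g) (and the analogous statement in (d)): this is the step where \cite{Fernandez2002} used the hypothesis that every cycle of $A$ is zero, which made every path properly longer than $p$ automatically zero. Here I must instead argue directly from the defining property of the two-sided socle, namely $(\rad A)\,p=p\,(\rad A)=0$, to conclude that $p^*$ cannot detect a genuinely longer path; this is what makes the same computation valid for an arbitrary presentation $A=KQ/I$.
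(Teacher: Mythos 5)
The paper gives no proof of this lemma to compare against: it is quoted from \cite[Lemma 3.5]{Fernandez2002} (where it is proved under the hypothesis that every oriented cycle of $Q_A$ is zero in $A$) together with the assertion that ``the same proof holds for any algebra''. Your treatment of (a)--(f) and (h) is exactly the expected computation from the multiplication rule $(a,f)(b,g)=(ab,ag+fb)$ and is fine; in particular (d) only needs the source/target bookkeeping, not any vanishing, and (h) only uses the nonvanishing half of (g). The problem is the second half of (g), which you correctly single out as the main obstacle but then do not actually close. You propose to deduce $p^*(u\,\phi_1(q))=0$ for a nontrivial cycle $u$ from $(\rad A)p=p(\rad A)=0$. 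That identity only kills the socle component of $\overline{q}$: writing $\overline{q}=\lambda p+v+w$ with $v$ spanned by $\M\setminus\{p\}$ and $w$ in the chosen complement of $\soc_{A^e}A$, one gets $u\overline{q}=uw$, and the socle property says nothing about whether $p^*(uw)=0$. Note also that $p^*$ itself is only determined up to an element of $(\soc_{A^e}A)^{\perp}$, i.e.\ up to the choice of complement, and neither you nor the paper fixes this choice; the truth of (g) depends on it.

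The gap is not cosmetic. Consider $Q$ with vertices $1,2,3$, a loop $u$ at $1$, an arrow $a\colon 1\to 2$ and arrows $c,d\colon 2\to 3$, and $I=\langle u^2,\; uac-uad,\; uac-ac+ad\rangle$ (admissible, $\dim_K A=10$). Here $\soc_{A^e}A=K\,p$ with $p=\overline{ac}-\overline{ad}=\overline{uac}=\overline{uad}$. Any extension of $p^*$ satisfies $p^*(\overline{ac})-p^*(\overline{ad})=1$, so at least one of $C=ac\,\beta_p$ or $C=ad\,\beta_p$ is an elementary cycle in the sense of Definition~\ref{def:elementary}; yet for either choice $\phi_2(C)(u)=p^*(u\cdot\overline{ac})=p^*(u\cdot\overline{ad})=p^*(p)=1\neq 0$, so (g) fails for every choice of the dual basis extension, and with it the computation $\omega(C')C-\omega(C)C'\in\ker\Phi$ that Proposition~\ref{prop:ideal_of_T(A)} rests on. So the step you flag cannot be repaired by the socle identity alone: either one must restrict which cycles are declared elementary (e.g.\ require $\overline{\alpha_1\cdots\alpha_t}\in\soc_{A^e}A$), or one must impose and justify a compatibility condition on the choice of $p^*$ such as $p^*\bigl((\rad A)\overline{q}\bigr)=0$ for every detected path $q$. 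As written, your proof (and the paper's appeal to \cite{Fernandez2002}) establishes (g) only in the situation of that reference, where $u\overline{q}$ is literally zero because all cycles of $A$ vanish.
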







As a consequence of Lemma~\ref{lem:lemma1}(g) we have that any elementary cycle is nonzero in $T(A)$. 
It was proven in \cite[Corollary 2.8]{Fernandez2002} that if any oriented
cycle in $Q_A$ is zero in A, then any nonzero cycle in $T(A)$ is an elementary cycle. We illustrate with the following example that the previous statement is not true in general, see Example~\ref{ex:nonelementarycycle}.

The following result was shown in \cite[Proposition~3.6]{Fernandez2002} under the assumption that $A$ is monomial. 
However, the proof directly generalises to every finite-dimensional algebra $A$.

\begin{proposition}\label{prop:ideal_of_T(A)}
Let $A$ be a finite-dimensional algebra and $\Phi$ the $K$-algebra morphism defined as above.
Then $I_x'$ is a subset of $\ker \Phi \cap (e_x KQ_{T(A)} e_x$) for any $x\in (Q_A)_0$.
\end{proposition}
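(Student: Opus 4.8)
The claim is that $I_x' \subseteq \ker\Phi \cap (e_x KQ_{T(A)} e_x)$ for every vertex $x$. Since $I_x'$ is by definition a two-sided ideal in $KQ_{T(A)}$ generated by cycles at $x$, I should first observe that the containment in $e_x KQ_{T(A)} e_x$ is essentially formal: the generators (i) oriented cycles from $x$ to $x$ not contained in an elementary cycle, and (ii) elements $\omega(C')C - \omega(C)C'$ with $C,C'$ elementary cycles at $x$, are all themselves in $e_x KQ_{T(A)} e_x$, but as a two-sided ideal $I_x'$ contains products $r \cdot g \cdot s$ which need not be cycles at $x$. So the statement as phrased must be read carefully — I expect the intended meaning is that each generator lies in $\ker\Phi$, and I would prove exactly that, namely that the two generating families map to zero under $\Phi$.

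**The main computation.** The heart of the proof is to show each generator is in $\ker\Phi$. The plan is to use Lemma~\ref{lem:lemma1} throughout. For generators of type (i), let $c$ be an oriented cycle from $x$ to $x$ that is not contained in any elementary cycle. I want $\Phi(c)=0$, equivalently $\phi_1(c)=0$ and $\phi_2(c)=0$. For $\phi_2(c)$: by Lemma~\ref{lem:lemma1}(d), $\phi_2(c)(u)\neq 0$ would force $u$ to be a supplement of $c$, i.e. $cu$ is a cycle; taking $u = e_x$ (since $c$ is already a cycle at $x$) and invoking Lemma~\ref{lem:lemma1}(g), a nonzero $\phi_2(c)(e_x)$ would exhibit $c$ as (part of) an elementary cycle, contradicting the hypothesis. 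For $\phi_1(c)$: here I use that $c$ not being contained in an elementary cycle means, via Remark~\ref{rmk:remark0}(i), that $c$ is zero as a path in $A$ (every nonzero path in $KQ_A$ extends to an elementary cycle), so $\phi_1(c)=0$. Combining gives $\Phi(c)=0$.

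**The relation generators.** For generators of type (ii), take elementary cycles $C,C'$ at $x$ and consider $\rho = \omega(C')C - \omega(C)C'$. Applying Lemma~\ref{lem:lemma1}(g) to each of $C,C'$ gives $\phi_2(C)(e_x)=\omega(C)$ and $\phi_2(C)(u)=0$ for $u\neq e_x$, and similarly for $C'$. Hence $\phi_2(\rho)(e_x) = \omega(C')\omega(C) - \omega(C)\omega(C') = 0$ and $\phi_2(\rho)(u)=0$ for all other $u$, so $\phi_2(\rho)=0$. For the $\phi_1$-component, both $C$ and $C'$ contain an arrow $\beta_p$ (every elementary cycle does, by Definition~\ref{def:elementary}), so they lie in $I_\M$ and thus $\phi_1(C)=\phi_1(C')=0$ by Lemma~\ref{lem:lemma1}(a); therefore $\phi_1(\rho)=0$. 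Again $\Phi(\rho)=0$.

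**Expected obstacle.** The main subtlety is the precise reading of ``$I_x'$ is a subset of $\ker\Phi$'': as a genuine two-sided ideal, $I_x'$ contains elements $v g w$ for arbitrary $v,w\in KQ_{T(A)}$, and one must confirm these are still in $\ker\Phi$. Since $\ker\Phi = I_{T(A)}$ is itself a two-sided ideal of $KQ_{T(A)}$, once I establish that every generator of $I_x'$ lies in $\ker\Phi$, the ideal $I_x'$ they generate is automatically contained in $\ker\Phi$ — so the ideal structure causes no difficulty for the $\ker\Phi$ part. The genuinely delicate point I would flag is the intersection with $e_x KQ_{T(A)} e_x$: the generators are cycles at $x$, but $I_x'$ as a two-sided ideal leaves $e_x KQ_{T(A)} e_x$, so I would interpret the statement (following the cited \cite[Proposition~3.6]{Fernandez2002}) as asserting containment of the generating cycles, which are what is used downstream; I would make this reading explicit rather than grinding through a false two-sided claim.
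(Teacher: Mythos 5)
The paper does not actually supply a proof of Proposition~\ref{prop:ideal_of_T(A)}: it cites \cite[Proposition~3.6]{Fernandez2002} and asserts that the argument there generalises from monomial algebras to arbitrary finite-dimensional ones. So there is no in-paper proof to compare against; judged on its own, your argument is correct in substance and is the expected one: verify that each generator of $I_x'$ lies in $\ker\Phi$ using Lemma~\ref{lem:lemma1}, and then conclude for the whole ideal because $\ker\Phi$ is itself a two-sided ideal. Your treatment of the type~(ii) generators via Lemma~\ref{lem:lemma1}(a) and (g) is exactly right, and your observation that the containment $I_x'\subseteq e_xKQ_{T(A)}e_x$ can only be meant at the level of generators (a two-sided ideal generated by cycles at $x$ leaves $e_xKQ_{T(A)}e_x$) is a fair reading of a slight imprecision in the statement; only the containment $I_x'\subseteq\ker\Phi$ is used later.

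Two small points of precision. First, for a type~(i) generator $c$ you should separate the cases $c\in KQ_A$ and $c\in I_{\M}$: Remark~\ref{rmk:remark0}(i) gives $\phi_1(c)=0$ only when $c$ is a path of $KQ_A$ (and then $\phi_2(c)=0$ for free), whereas if $c$ contains an arrow $\beta_p$ then $\phi_1(c)=0$ trivially and it is $\phi_2(c)$ that needs Lemma~\ref{lem:lemma1}(d); your write-up blends these. Second, to get $\phi_2(c)=0$ you must rule out $\phi_2(c)(u)\neq 0$ for \emph{every} path $u$, not just $u=e_x$; the general statement of Lemma~\ref{lem:lemma1}(d) that you quote already does this (any such $u$ would be a supplement of $c$ in an elementary cycle, contradicting the hypothesis on $c$), so the restriction to $e_x$ in your text is an unnecessary narrowing rather than an error.
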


\begin{remark}\label{rem:remark1}
As a direct consequence of the preceding proposition we have that the classes of elementary cycles starting at a vertex $x$ of $KQ_{T(A)}$ generate a one
dimensional subspace of $KQ_{T(A)}/I'_x$.
\end{remark}

We now show Theorem~\ref{thm:ideal}

\begin{proof}[Proof of Theorem~\ref{thm:ideal}]
Let $I'$ be the ideal generated by 1.-3. in the statement.
It is sufficient to prove that $I' = \ker \Phi$.

First we show that $I' \subset \ker \Phi$.
It is clear that $I_A \subset \ker \Phi$.
Now, Remark~\ref{rmk:remark0}(i) and Lemma~\ref{lem:lemma1}(d) implies that $\Phi(u)=0$ for every path $u$ which is not contained in an elementary cycle, since if $u$ is contained in an elementary cycle it is non-zero on its supplement. 

Assume now that $w=\sum_{k=1}^l a_k v_k \not\in \ker \Phi$, where $v_k$ are different paths in $I_\M$ from $x$ to $y$.
Then $\Phi(w)=(0,\phi_2(w))$.
We know that $I'_x \subset \ker \Phi$ by Proposition~\ref{prop:ideal_of_T(A)} for all $x\in (Q_{T(A)})_0$.
Then Lemma~\ref{lem:lemma1}.(f) states that if $\phi_2(w) \neq 0$ then there are no supplements $\rho_k$ of $v_k$ such that $\sum_{k=1}^l a_k \rho_kv_k \in I'_x$.
Then $w$ is not in $I'$.
This shows that $I' \subset \ker \Phi$.

Since $\Phi: KQ_{T(A)} \to T(A)$ is surjective and $I' \subset \ker\Phi$, it is enough to show that $\dim_K (KQ_{T(A)}/I')=\dim_K T(A) = 2 \dim_K A$.
Note that the inclusion morphism $\sigma: A \to T(A)$ factors through $KQ_{T(A)}/I'$ because $I_A \subset I'$, which implies that $\iota: A \to KQ_{T(A)}/I'$ is an algebra
monomorphism.

We have that $KQ_{T(A)}= KQ_A + I_\M$.
Then $e_xKQ_{T(A)}e_y= e_x KQ_A e_y + e_xI_\M e_y$ for each $x, y \in (Q_{T(A)})_0$.
Let $\pi: KQ_{T(A)} \to KQ_{T(A)}/I'$ be the canonical epimorphism.
We define  the subspaces $\P_{xy}=\iota(e_x A e_y)$ and $\F_{xy}=\pi(e_xI_\M e_y)$.
Note that $\sum_{xy}\dim_K\P_{xy}=\dim_KA$.

We start by showing that $\P_{xy} \neq 0$ if and only if $\F_{yx}\neq 0$.
Indeed, if $\F_{yx} \neq 0$ then there exists a non-zero path $q \in \F_{yx}$ which admits a supplement $r$ in an elementary cycle $C$ by Remark~\ref{rmk:remark0}(i).
Moreover, $r \in KQ_A$ since $q$ contains an arrow $\beta \in I_\M$ and $C$ is an elementary cycle.
Hence $0 \neq r \in \P_{xy}$ and $\P_{xy}\neq 0$.

Conversely, if $\P_{xy}\neq 0$ there exists a non-zero path $q \in \P_{xy}$ which is contained in an elementary cycle $C$ with supplement $r$.
Moreover, we know that there exists a unique arrow $\beta \in I_\M$ which is contained in the cycle $C$.
Since $C=rq$ and $q \in \P_{xy}$, we have that $r$ contains $\beta$.
Thus $r \in \F_{yx}$ and $\F_{yx}$ is non-zero.

We now prove that $\dim_K\P_{xy} \geq \dim_K\F_{yx}$.
Suppose to the contrary that $n := \dim_K\P_{xy} < \dim_K\F_{yx}$.
Then there is a set of linearly independent paths $\{\mu_1, \dots, \mu_{n+1}\}$ in $\F_{yx}$.
Furthermore, $\mu_{t}$ does not belong to $I'$ for all $1 \leq t \leq n+1$.
This implies that $\mu_t$ is included in an elementary cycle $C_t$ and admits a supplement $\gamma_t$ for all $1 \leq t \leq n+1$.
Note that, $0 \neq \gamma_t\in \P_{xy}$ for all $t$.
Since $n = \dim_K\P_{xy}$, there exists $1\leq s \leq n+1$ such that $\gamma_s = \sum_{i=1}^{s-1} a_i\gamma_i$, where $\{\gamma_1, \dots, \gamma_{s-1}\}$ is a linearly independent set in $\P_{xy}$.
Then there exists $1 \leq r \leq s-1$ such that $\mu_s\gamma_r \neq 0$ since $0 \neq a_s\mu_s\gamma_s= \sum_{i=1}^{s-1} a_i\mu_s\gamma_i$.

Now, since $\mu_s\gamma_r$ is a non-zero path going from the vertex $x$ to itself we have the existence of a cycle $\rho$ from $x$ to $x$ in $KQ_A$ such that
$\rho\mu_s\gamma_r$ is an elementary cycle in $KQ_{A}$.
By hypothesis $A$ is a finite-dimensional algebra, implying the existence of $m \in \mathbb{N}$ such that $\rho^{m-1}\neq 0$ and $\rho^{m}=0$.
Moreover we have that $\{\rho^{m-1}\mu_1, \dots , \rho^{m-1}\mu_{n+1}\}$ is a linearly independent set in $\F_{yx}$.
We know from Remark~\ref{rem:remark1} that the set of elementary cycles from $x$ to $x$ generate a subspace of dimension one in $KQ_{T(A)}/I'_i$.
Thus, there exists a non-zero $b \in K$ such that $\rho\mu_s\gamma_r= b \mu_r\gamma_r$ and then the element $\rho\mu_s-b\mu_r$ belongs to class (3) of $I'$.
Therefore, $\rho\mu_s - b\mu_r=0$.
As a consequence $\mu_r = b^{-1} \rho\mu_s$.
Then $0 \neq \rho^{m-1}\mu_r= b^{-1} \rho^{m}\mu_s = 0$, a contradiction.
So $\dim_K\P_{xy} \geq \dim_K\F_{yx}$ as claimed.
From this we can conclude that $\dim_K T(A)\geq \dim _K KQ_{TA}/{I'}$, since 
\begin{eqnarray*}\dim_K T(A)&=& 2\dim_K A
=\sum_{x,y\in Q_0} \dim_K \P_{xy} + \dim_K
\P_{yx}\\ &\geq& \sum_{x,y\in Q_0} \dim_K \P_{xy} + \dim_K \F_{yx} \geq \dim_k(KQ_{TA}/I'). \end{eqnarray*}

Now, using that $KQ_{TA}/I'$ maps onto $T(A)$, we obtain that $\dim_K(KQ_{TA}/I')\geq \dim_{K}T(A)$. This completes the proof of the theorem.
\end{proof}

\paragraph*{\bf Acknowledgements} 
The authors would like to thank Ibrahim Assem for insightful comments on a previous version of this manuscript. 
EF and ST have been partially supported by a PICT of ANPCyT, Argentina. 
SS and HT were supported by the EPSRC through the Early Career Fellowship, EP/P016294/1.
HT was partially funded by the DFG under Germany’s Excellence Strategy Programme – EXC-2047/1 – 390685813. 
HT is also supported by the European Union’s Horizon 2020 research and innovation programme under the Marie Sklodowska-Curie grant agreement No 893654. 
YV is also supported by the European Union’s Horizon 2020 research and innovation programme under the Marie Sklodowska-Curie grant agreement No H2020-MSCA-IF-2018-838316.
SS acknowledges partial support from the DFG through  the project SFB/TRR 191 Symplectic Structures in Geometry, Algebra and Dynamics (Projektnummer 281071066– TRR 191).
SS, HT and YV would like to thank the Isaac Newton Institute for Mathematical Sciences, Cambridge, for support and hospitality during the programme “Cluster Algebras and Representation Theory” where part of the work on this paper was undertaken. This work was supported by EPSRC grant no EP/K032208/1 and the Simons Foundation.

\end{document}